
\documentclass[reqno,15pt]{amsart}
\pagestyle{plain}
\usepackage{amsmath}
\usepackage{amssymb}
\usepackage{amscd}
\usepackage{graphics}
\usepackage{latexsym}
\usepackage{hyperref}



\theoremstyle{plain}
\newtheorem{theorem}{Theorem}[section]
\newtheorem{thm}[theorem]{Theorem}
\newtheorem{cor}[theorem]{Corollary}
\newtheorem{lem}[theorem]{Lemma}
\newtheorem{prop}[theorem]{Proposition}

\newcounter{kludge}
\setcounter{kludge}{0}
\newcounter{kludgeb}
\setcounter{kludgeb}{0}


\theoremstyle{definition}
\newtheorem{defn}[theorem]{Definition}

\newtheorem{rmk}[theorem]{Remark}

\newtheorem{notat}[theorem]{Notation}

\newtheorem{hyp}[theorem]{Hypothesis}


\theoremstyle{remark}


\input xy
\xyoption{all}


\newcommand{\marpar}[1]{}
\newcommand{\mni}{\medskip\noindent}

\newcommand{\mbb}{\mathbb}

\newcommand{\ZZ}{\mbb{Z}}

\newcommand{\RR}{\mbb{R}}

\newcommand{\PP}{\mbb{P}}

\newcommand{\mc}{\mathcal}

\newcommand{\mcC}{\mc{C}}

\newcommand{\mcM}{\mc{M}}

\newcommand{\mcT}{\mc{T}}

\newcommand{\OO}{\mc{O}}

\newcommand{\ol}{\overline}
\newcommand{\ul}{\underline}



\newcommand{\SP}{\text{Spec }}
\newcommand{\Hilb}{\text{Hilb}}
\newcommand{\HB}[2]{\Hilb^{#1}_{#2}}





\newcommand{\Gm}[1]{\mathbb{G}_{#1}}


\newcommand{\Sec}{\text{Sec}}

\newcommand{\pr}[1]{\text{pr}_{#1}}



\newcommand{\Xx}{X}
\newcommand{\Ll}{\mathcal{L}}
\newcommand{\Cc}{C}

\newcommand{\kp}{\kappa}
\newcommand{\Rr}{R}

\newcommand{\FR}{K}
\newcommand{\Lb}{\Lambda}

\newcommand{\cb}{b}


\newsavebox{\sembox}
\newlength{\semwidth}
\newlength{\boxwidth}

\newsavebox{\semrbox}
\newlength{\semrwidth}
\newlength{\boxrwidth}


\title
{Rational points of rationally simply connected varieties over global
  function fields} 

\author[Starr]{Jason Michael Starr}
\address{Department of Mathematics \\
  Stony Brook University \\ Stony Brook, NY 11794}
\email{jstarr@math.stonybrook.edu} 

\author[Xu]{Chenyang Xu}
\address{BICMR \\
Beijing, China}
\email{cyxu@math.pku.edu.cn}

\date{\today}

\begin{document}


\begin{abstract}
  A complex projective manifold is rationally connected,
  resp. rationally simply connected, if 
  finite subsets are connected by a rational curve, resp. 
the spaces parameterizing these
  connecting rational curves are themselves rationally connected.  We
  prove that a projective scheme over a global function field with
  vanishing ``elementary obstruction''
  has a rational point if it deforms to a rationally simply connected
  variety in characteristic 0.  This gives new, uniform
  proofs over these fields of the Period-Index Theorem, the
  quasi-split case of Serre's ``Conjecture II'', and Lang's $C_2$ property. 
\end{abstract}


\maketitle



\section{Statement of the Theorem} 
\label{sec-state} \marpar{sec-state}

\mni
For every projective variety over every field, 
the elementary obstruction of Colliot-Th\'{e}l\`{e}ne and Sansuc is an
obstruction to the existence of a rational point, \cite{CTSansuc}.  We
prove that for a global function field, for a projective variety that
is a specialization from characteristic $0$ of a \emph{rationally
  simply connected} variety, \cite{dJHS}, \cite{Zhu}, the elementary
obstruction is the only obstruction to existence of a rational point.
This gives a uniform proof of three classical existence theorems for rational
points of varieties over global function fields: the Period-Index
Theorem, the quasi-split case of Serre's ``Conjecture II'', and Lang's
proof that global function fields have the $C_2$ property.
Using work of Robert
Findley, \cite{Findley}, we give a new existence theorem for rational
points of hypersurfaces in
(twists of) Grassmannians over global function fields.

\mni
In addition to rational simple
connectedness, the key ingredient is the theory of H\'{e}l\`{e}ne
Esnault on rational points over finite fields of varieties with
coniveau $\geq~1$, \cite{Esnaultpadic}.  Although the elementary
obstruction is in general quite subtle, for rationally simply
connected varieties, or more generally those varieties satisfying
hypotheses identified by Yi Zhu, the elementary obstruction has a
straightforward, geometric meaning.  

\begin{defn}\cite[Definition 2.10]{Zhu} \label{defn-Zhu} \marpar{defn-Zhu}
For every projective morphism $f_T:X_T\to T$ of Noetherian schemes,
the \emph{acyclic open subscheme} $T^o$ of $T$ is the maximal open
such that the pullback $X^o = X_T\times_T T^o$ and
$f^o:X^o\to T^o$ satisfy the following,
\begin{enumerate}
\item[(i)] $f^o$ is flat with integral geometric fibers $X_t$,
\item[(ii)] every $X_t$ is locally complete intersection,
  and $X_t$ is smooth in codimension $\leq 3$,
\item[(iii)] $h^1(X_t,\OO_{X_t})$ and $h^2(X_t,\OO_{X_t})$ equal $0$, and
\item[(iv)] $X_t$ is algebraically simply connected. 
\end{enumerate}
If $T^o$ equals $T$, resp. if $T^o$ is dense in $T$, 
then $\Xx_T/T$ satisfies the \emph{acyclic hypothesis}, resp. satisfies
the \emph{acyclic hypothesis generically}.
\end{defn}

\mni
The relative Picard functor
$\text{Pic}_{X^o/T^o}$ 
of $f^o$ is
representable by an \'{e}tale group scheme over $T^o$, 
and it is \'{e}tale
locally a constant sheaf $\ZZ^\rho$.  
The dual \'{e}tale sheaf,
$$
\text{Hom}_{T^o-\text{gr}}(\text{Pic}_{X^o/T^o},\Gm{m,T^o})
$$
is representable by a smooth group scheme $Q$ over $T^o$
that 
is \'{e}tale locally isomorphic to $\Gm{m,T^o}^\rho$.
By construction, the \'{e}tale sheaf
$R^1f^o_*((f^o)^*Q)$ on $U$ equals
$$\text{Hom}_{T^o-\text{gr}}(\text{Pic}_{X^o/T^o},\text{Pic}_{X^o/T^o}).$$

\begin{defn} \label{defn-elem} \marpar{defn-elem}
Assume that the acyclic hypothesis holds generically.
For every $T^o$-scheme $V$, a $Q$-torsor $\mcT_V$ on $X_V=X_T\times_T
V$ 
is \emph{universal} if the associated
global section of $R^1(f_V)_*(f_V^*Q)$ equals the identity morphism of
$\text{Pic}_{X_V/V}$.  
The morphism $f_V$ has
\emph{trivial elementary obstruction} if there exists a
universal $Q$-torsor $\mcT_V$ on $X_V$.  
\end{defn}

\begin{rmk} \label{rmk-elem} \marpar{rmk-elem}
Let $T$ be an integral scheme, and let $f_T:\Xx_T\to T$ be a morphism
that satisfies the  acyclic hypothesis generically.  There exists a rational
section of $f_T$ only if there exists a universal torsor over a dense
open subscheme of $T$, \cite[Proposition 2.11]{Zhu}.
\end{rmk}

\mni
Assuming the acyclic hypothesis, there exists a rational section
only if the elementary obstruction vanishes over some dense open
$V\subset T^o$, cf. \cite[Proposition 2.11]{Zhu}.   
The acyclic hypothesis holds often. In particular,
it holds if the geometric generic fibers of $f_T$ are
smooth and satisfy one of Cases 1 to 4.
\begin{enumerate}
\item[Case 1.]  The fibers are complete
  intersections in $\mathbb{P}^n$ of $\cb$ hypersurfaces of degrees
  $(d_1,\dots,d_\cb)$, $d_i\geq 2$, satisfying $d_1^2 + \dots + d_\cb^2 \leq n$.
\item[Case 2.]  The fibers are
  Grassmannians parameterizing $\ell$-dimensional subspaces of an
  $m$-dimensional vector space.
\item[Case 3.]  The fibers are
  homogeneous for a smooth action of a linear algebraic group.
\item[Case 4.]  The fibers are
  hypersurfaces in a Grassmannian, as in Case 2, whose Pl\"{u}cker
  degree $d$ satisfies $(3\ell-1)d^2 - d < m-4\ell-1$.  
\end{enumerate}

\mni
Let $F(\eta)$ be the function field of a smooth projective curve over
a finite field $F$, i.e., $F(\eta)$ is a \emph{global function field}.  
Let $f_\eta:\Xx_\eta \to \SP F(\eta)$ be a
projective 
morphism. Let $\Rr$ be a Henselian DVR with residue field $F$, and with
fraction field $\FR$ a characteristic $0$ field.

\begin{defn} \label{defn-lift-b} \marpar{defn-lift-b}
An \emph{integral extension over $\Rr$} is a pair of projective
morphisms satisfying the following hypotheses,
$$
(\pi:\Cc_\Rr\to \SP
\RR, f_\Rr :\Xx_\Rr \to \Cc_\Rr).$$
\begin{enumerate}
\item[(1)]
The morphism $\pi$ is flat, has connected geometric fibers, and the $R$-smooth
locus $\Cc_{\Rr,\text{sm}}$ contains both
the generic fiber $\Cc_\FR$ and a generic point $\eta$ of $\Cc_F$
whose residue field equals $F(\eta)$.
\item[(2)]
The flat locus in $\Cc_\Rr$ of the morphism 
$f_\Rr$ 
contains $\eta$, and the fiber of $f_\Rr$ over $\eta$ equals
$f_\eta$.     
\end{enumerate}
\end{defn}

\begin{thm} \label{thm-app} \marpar{thm-app}
For every global function field $F(\eta)$, for every projective
$F(\eta)$-scheme $\Xx_\eta$ 
that is Case 1 to 4 and has a universal torsor, there exists an
integral extension over $\Rr$ such that $\Xx_\FR\to \Cc_\FR$ satisfies
the acyclic hypothesis and has a universal torsor.  There exists an
$F(\eta)$-point of $\Xx_\eta$.
\end{thm}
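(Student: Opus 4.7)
The plan is to reduce to the main theorem of the paper---which produces an $F(\eta)$-point whenever $\Xx_\eta$ deforms in characteristic zero to a rationally simply connected variety with trivial elementary obstruction---by constructing the integral extension over $\Rr$ and verifying its hypotheses case by case.

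First I would take $\Rr$ to be a Cohen ring for $F$, e.g.\ $\Rr = W(F)$, so that $\FR$ has characteristic zero. Let $\Cc_F$ be the smooth projective model of $F(\eta)$; smooth projective curves are unobstructed (the obstructions live in $H^2$ of the tangent sheaf and vanish), so $\Cc_F$ lifts to a smooth relative curve $\pi:\Cc_\Rr\to \SP \Rr$, with $\eta$ lying in its smooth locus and having the correct residue field. Next I would lift the family. In each of Cases 1--4, the variety $\Xx_\eta$ is cut out by moduli-theoretic data over $\SP F(\eta)$: sections of $\OO(d_i)$ in a projective bundle (Cases 1 and 4), a rank-$\ell$ locally free subsheaf of a rank-$m$ sheaf (Cases 2 and 4), or a torsor for a linear algebraic group together with a homogeneous action (Case 3). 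Because $\Cc_\Rr$ is two-dimensional and regular, the obstructions to spreading these data---living in $H^2$ of coherent sheaves on the one-dimensional $\Cc_F$---vanish, producing $f_\Rr:\Xx_\Rr\to \Cc_\Rr$ whose fiber at $\eta$ is $f_\eta$.

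The characteristic-zero family $\Xx_\FR \to \Cc_\FR$ then satisfies the acyclic hypothesis: conditions (i)--(iv) of Definition~\ref{defn-Zhu} hold generically for each Case over an algebraically closed field of characteristic zero, since the fibers are smooth complete intersections, Grassmannians, homogeneous spaces, or low-degree hypersurfaces in Grassmannians, all algebraically simply connected and with the required coherent-cohomology vanishing. Rational simple connectedness of the geometric generic fibers is the content of existing work: \cite{dJHS} for Case 1, direct rational-curve arguments for Cases 2 and 3, and \cite{Findley} for Case 4. Finally, in each Case the relative Picard sheaf is a constant lattice $\ZZ^\rho$ generated by canonical classes pulled back from the ambient variety (hyperplane classes, Pl\"ucker classes, or equivariant generators), so the universal torsor is essentially a $\Gm{m}^\rho$-torsor assembled from these line bundles. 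The hypothesis that $\Xx_\eta$ carries a universal torsor translates into the existence of $F(\eta)$-rational generators of $\text{Pic}_{\Xx_\eta/F(\eta)}$; these spread across the lift to yield a universal torsor on $\Xx_\FR/\Cc_\FR$. The existence of an $F(\eta)$-point now follows by invoking the main theorem of the paper on this integral extension.

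The principal obstacle is the extension of the universal torsor. The elementary obstruction is cohomological, and in general there is no reason that a characteristic-zero lift of $\Xx_\eta$ should carry a universal torsor just because one exists on the special fiber. The way through is to exploit the special Picard-group structure of Cases 1--4, so that the hypothesis on $\Xx_\eta$ reduces to the existence of certain canonical line bundles defined over $F(\eta)$, which one can then spread to $\Cc_\Rr$ using the vanishing of higher cohomology on the smooth relative curve.
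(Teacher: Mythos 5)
Your overall reduction---construct an integral extension with an Abel sequence, then invoke Theorem \ref{thm-main-b}---is the right skeleton, but the way you build the integral extension is not the paper's construction and it has a genuine gap. You propose to lift the smooth model $\Cc'_F$ of $F(\eta)$ to a smooth relative curve over $\Rr$ and then spread the defining data of $\Xx_\eta$ across that lift. Note first that Definition \ref{defn-lift-b} does not ask for this: the special fiber $\Cc_F$ only needs to contain \emph{a} generic point with residue field $F(\eta)$, and the paper exploits this freedom essentially. The construction in Theorems \ref{thm-group}, \ref{thm-TsenLang}, and \ref{thm-Grass} encodes $(\Xx_\eta,\mcT_\eta)$ as a morphism $\zeta:\Cc'_F\to \ol{M}$ into a parameter space defined over $\Rr$ (a GIT quotient that is a generic splitting variety for $BG_S$ in Cases 2--4, the projective space of tuples of forms in Case 1), and then takes a \emph{general} complete intersection of very ample divisors through $\zeta(\Cc'_F)$. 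The generic fiber $\Cc_\FR$ is then a general curve in $\ol{M}_\FR$, not a lift of $\Cc'_F$, and this generality is exactly what lets one verify the hypotheses of \cite[Theorem 13.1]{dJHS} and \cite{Zhu}: smoothness of the total space $\Xx_\FR$, irreducibility of every geometric fiber (at worst one ordinary double point, via Proposition \ref{prop-Lef-b} and the codimension $>1$ boundary of Theorem \ref{thm-SdJ}), and control of the sections of the relative Picard by Bertini irreducibility. Your lifted family is a specific, non-general fibration over a specific curve, and none of the cited sources gives an Abel sequence for it; ``rationally simply connected geometric generic fiber'' alone is not Hypothesis \ref{hyp-Abel}. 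Moreover, your obstruction computation is off: the obstruction to lifting a section of a coherent sheaf on a curve lies in $H^1$, not $H^2$, so the spreading of the defining sections is not automatic; and the claim that a universal torsor spreads because ``the relative Picard is a constant lattice'' conflates a section of the Picard functor with an actual line bundle, which is precisely the elementary obstruction you cannot assume away.

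A second, independent omission: in Case 3 the complete intersection family is only rationally simply connected when the triple $(H_S,P_S,G_S)$ is \emph{primitive} (invariant Picard rank $1$). When it is not, the paper does not apply Theorem \ref{thm-main-b} directly but runs an induction on dimension through a larger $G_S$-stable parabolic $P'_S$ (Corollary \ref{cor-group}, using \cite[Lemma 12.1]{Zhu}), producing the point on $\Xx'_\eta$ first and then on the fiber. Your ``direct rational-curve arguments for Cases 2 and 3'' skips this entirely. To repair the proposal you should replace the lifting of $\Cc'_F$ by the parameter-space construction (which also delivers the universal torsor on $\Xx_\FR$ for free, by restriction of $\mcT_M$), and add the primitivity induction in Case 3.
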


\mni
The proof is at the end of Section \ref{sec-extend}.  In forthcoming
work we address height bounds and variants of this theorem.

\begin{rmk} \label{rmk-hist} \marpar{rmk-hist}
There are several earlier theorems relating to these four cases.
\begin{enumerate}
\item[Case 1.] 
For complete intersections, Serge Lang proved
existence of $F(\eta)$-points,
cf. \cite[Corollary p. 378]{Lang52}.  Lang's Theorem is sharp: for
every $(n,d_1,\dots,d_\cb)$ with 
$d_1^2+\dots+d_\cb^2=n+1$, there are varieties with no $F(\eta)$-point.
\item[Case 2.] 
If $X_{F(\eta)}$ is geometrically isomorphic to a
Grassmannian and has vanishing elementary obstruction, then
existence of an $F(\eta)$-point follows from the
Brauer-Hasse-Noether theorem.  Conversely, existence of
$F(\eta)$-points
implies the
``Period-Index Theorem'' for $F(\eta)$.
\item[Case 3.]
For every connected, simply connected,
semisimple group $G$ over $F(\eta)$, for every (standard) parabolic
subgroup $P$ of $G$,
for every $G$-torsor $E$,
the variety $X=E/P$ has vanishing
elementary obstruction.  By Harder's proof
of Serre's ``Conjecture II'', $X$ has an $F(\eta)$-point,
cf. \cite{Harder}.  
Conversely,
existence of $F(\eta)$-points  
implies the \emph{split case} of Serre's
``Conjecture II'' for $F(\eta)$, cf. \cite[Section 16]{dJHS}, and even 
the \emph{quasi-split case}, 
cf. \cite{Zhu}.   
\item[Case 4.]
The theorem above for Case 4 depends on Robert Findley's
work, \cite{Findley}.  There are counterexamples with $m$ equal to
$\ell d^2$  showing that the inequality is
asymptotically sharp up to a linear factor of $3$. 
\end{enumerate}
\end{rmk}



\section{Abel Sequences and Rational Points} 
\label{sec-Abel} \marpar{sec-Abel}

\mni
The key technique of \cite{dJHS} and \cite{Zhu} is an analysis of the
parameter schemes of rational sections of a fibration over a curve.  
This, in turn, reduces to an analysis of the spaces of rational curves
in the geometric generic fiber of the fibration.  The output of this
analysis is a sequence of parameter spaces of rational sections that
have rationally connected fibers relative to the Abel maps.  This is
an \emph{Abel sequence}.  The Abel maps are defined in terms of
classifying stacks of the Picard torus and their associated coarse
moduli spaces.

\begin{defn} \label{defn-Sec-b} \marpar{defn-Sec-b}
For every integral extension over $\Rr$,
denote by $\Sec(\Xx_\FR/\Cc_\FR/\FR)$ the
$\FR$-scheme parameterizing families of 
sections $\sigma$ of $f_\FR:\Xx_\FR\to \Cc_\FR$,
cf. \cite[Part IV.4.c,
p. 221-219]{FGA}.
Denote by
$\Sigma(\Xx_\FR/\Cc_\FR/\FR)$ the closure of the locally closed immersion
$\Sec(\Xx_\FR/\Cc_\FR/\FR) \to \HB{}{\Xx_\FR/\FR}$ that sends $\sigma$ to the
closed image of $\sigma$.
\end{defn}

\begin{defn} \label{defn-defo} \marpar{defn-defo}
For every integral extension over $\Rr$, for every point $[\mcC_F]$ of
$\Sigma(\Xx_\FR/\Cc_\FR/\FR)(F)$, 
a \emph{deformation after base change} of $[\mc{C}_{F}]$
is a pair $(\Rr\to \Rr',\mc{C}_{\Rr'})$ of a
finite type and local homomorphism of DVRs, ${\Rr}\to \Rr'$, whose field
extensions $K\subset K'$ and $F\subset F'$ are (automatically)
finite, together with 
an ${\Rr}'$-flat closed subscheme $\mc{C}_{\Rr'}$ of $\Xx_{\Rr'}$ that is the
closure of $\sigma_{K'}(\Cc_{K'})$ for a section $\sigma_{K'}$,
and whose closed fiber $\mc{C}_{\kp'}$ is the
base change
$\mc{C}_{\kp} \otimes_{\kp} \kp'$.  
\end{defn}

\begin{notat} \label{notat-V} \marpar{notat-V}
For every integral extension over $\Rr$, for every
$F$-point $[\mc{C}_F]$ of $\Sigma(\Xx_{\Rr}/\Cc_{\Rr}/{\Rr})$,
consider the
restriction $f|_{\mc{C}_F}:\mc{C}_F\to \Cc_F$.  Denote by $V_{\mc{C}_F}$ 
the maximal
open subscheme $V$ of $\Cc_F$ such that $f:\mc{C}_F\cap f^{-1}(V)\to V$ is an
isomorphism.   
\end{notat}

\begin{lem} \label{lem-Sigma} \marpar{lem-Sigma}
For every integral extension over $\Rr$,
every $F$-point $[\mc{C}_F]$ of $\Sigma(\Xx_\Rr/\Cc_\Rr/\Rr)$ has a
deformation after base change.  Moreover,
the open
subscheme $V_{\mc{C}_F}$ of $\Cc_F$ contains every generic point of 
$\Cc_{F,\text{sm}}$, i.e., there exists an open subscheme $V_{\mc{C}_F}$
containing every generic point of $\Cc_{F,\text{sm}}$ such that the
restriction $f:\mc{C}_F \cap f^{-1}(V_{\mc{C}_F}) \to V_{\mc{C}_F}$ is
an isomorphism.   
\end{lem}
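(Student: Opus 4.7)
The plan is to establish the two assertions in turn: the deformability of $[\mc{C}_F]$ via a cutting argument in the relative Hilbert scheme, and the generic-point assertion via Zariski's Main Theorem applied at a codimension-$1$ regular point of $\Cc_{\Rr'}$.

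By its construction, $\Sigma(\Xx_\Rr/\Cc_\Rr/\Rr)$ is the scheme-theoretic closure of the open subscheme $\Sec(\Xx_K/\Cc_K/K)$ inside a quasi-projective component $\HB{P}{\Xx_\Rr/\Rr}$ of the relative Hilbert scheme. Every irreducible component of $\Sigma$ is therefore $\Rr$-flat, with generic point corresponding to a section of $f_K$. Fix such a component $W$ containing $[\mc{C}_F]$ and intersect $W$ with general hyperplanes through $[\mc{C}_F]$ (equivalently, quotient $\OO_{W,[\mc{C}_F]}$ by a sufficiently general system of parameters) until one obtains an integral $1$-dimensional closed subscheme $Z\subset W$ through $[\mc{C}_F]$ whose generic point still lies in $\Sec$. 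Let $\Rr'$ be the local ring of the normalization of $Z$ at a chosen point above $[\mc{C}_F]$; this is a DVR, local and essentially of finite type over the Henselian DVR $\Rr$. Since $\Rr'$ is $1$-dimensional and $\Rr'/\pi_\Rr\Rr'$ is an Artinian, finite-type $F$-algebra, the residue extension $F\subset F'$ is automatically finite. The induced morphism $\SP\Rr'\to W\hookrightarrow \HB{P}{\Xx_\Rr/\Rr}$ corresponds to the desired $\Rr'$-flat family $\mc{C}_{\Rr'}\subset\Xx_{\Rr'}$, whose generic fiber is a section $\sigma_{K'}(\Cc_{K'})$ and whose closed fiber is $\mc{C}_F\otimes_F F'$.

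For the second assertion, let $\eta'$ be a generic point of $\Cc_{F,\text{sm}}$. Because $\pi:\Cc_\Rr\to\SP\Rr$ is flat and its closed fiber is smooth at $\eta'$, the morphism $\pi$ is smooth at $\eta'$ by the fiberwise criterion, so $\Cc_\Rr$ is regular there; by base change $\Cc_{\Rr'}$ is regular at any point $\eta'_{F'}$ of its closed fiber lying over $\eta'$, a codimension-$1$ point. The morphism $g:=f|_{\mc{C}_{\Rr'}}:\mc{C}_{\Rr'}\to\Cc_{\Rr'}$ is proper, and on the generic fiber of $\SP\Rr'$ it is the isomorphism $\sigma_{K'}(\Cc_{K'})\xrightarrow{\sim}\Cc_{K'}$ associated to a section; hence $g$ is proper and birational. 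By Zariski's Main Theorem applied at the codimension-$1$ regular point $\eta'_{F'}$ of $\Cc_{\Rr'}$, the morphism $g$ is an isomorphism on an open neighborhood of $\eta'_{F'}$. Restricting to closed fibers and descending along the faithfully flat extension $F\subset F'$ yields that $f|_{\mc{C}_F}$ is an isomorphism near $\eta'$, so $\eta'\in V_{\mc{C}_F}$.

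The main technical point is the application of Zariski's Main Theorem, which requires $g$ to be quasi-finite over $\eta'_{F'}$, i.e., no irreducible component of the $1$-dimensional closed fiber $\mc{C}_{F'}$ should map with positive-dimensional fiber onto the closure of $\eta'_{F'}$. This is forced by a degree count: the $\Rr'$-flatness of both $\mc{C}_{\Rr'}$ and $\Cc_{\Rr'}$, together with the generic isomorphism $g_{K'}$, yields the cycle-theoretic equality $g_*[\mc{C}_{F'}]=[\Cc_{F'}]$, from which exactly one component of $\mc{C}_{F'}$ maps birationally onto each component of $\Cc_{F'}$ locally at $\eta'_{F'}$. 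Combined with the regularity of $\Cc_{\Rr'}$ at $\eta'_{F'}$ and birationality of $g$, Zariski's Main Theorem then delivers the local isomorphism.
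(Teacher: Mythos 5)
Your proposal is correct, and the first assertion (cutting the component of $\Sigma$ down to a curve through $[\mc{C}_F]$, normalizing, and localizing to get the DVR $\Rr'$) is exactly the content the paper compresses into one sentence. For the second assertion you take a genuinely different, though closely parallel, route. The paper does not look at the morphism $g=f|_{\mc{C}_{\Rr'}}$ at all: it applies the valuative criterion of properness to the \emph{rational section} $\sigma_{K'}$ over the DVR $\OO_{\Cc_{\Rr'},\eta'_{F'}}$, concluding that $\sigma$ extends across every codimension-$1$ point where $\Cc_{\Rr'}$ is normal, and then identifies the locus where $\sigma$ is regular with the locus where $f|_{\mc{C}_{\Rr'}}$ is an isomorphism. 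You instead run Zariski's Main Theorem on the proper birational morphism $g:\mc{C}_{\Rr'}\to\Cc_{\Rr'}$ at the regular codimension-$1$ point $\eta'_{F'}$. Both arguments hinge on the same fact, namely that $\OO_{\Cc_{\Rr'},\eta'_{F'}}$ is a DVR; the paper's version is shorter because the valuative criterion produces the inverse of $g$ directly, whereas ZMT first requires the quasi-finiteness check you supply. On that point, note that your cycle-theoretic degree count, while correct, is more than is needed: since $g_{F'}:\mc{C}_{F'}\to\Cc_{F'}$ is proper over $F'$, every non-generic point of the $1$-dimensional scheme $\mc{C}_{F'}$ is closed and therefore maps to a closed point of $\Cc_{F'}$, so $g^{-1}(\eta'_{F'})$ is automatically contained in the finite set of generic points of $\mc{C}_{F'}$. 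What the equality $g_*[\mc{C}_{F'}]=[\Cc_{F'}]$ actually buys (using that the component through $\eta'_{F'}$ has multiplicity one, by smoothness of $\Cc_F$ at $\eta'$) is the a priori stronger statement that a unique component dominates $W$ and does so birationally — which ZMT would in any case deliver a posteriori once normality of the local ring forces $B=A$ in the finite birational extension. Your final descent along $F\subset F'$ matches the paper's.
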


\begin{proof}
Since $\Sigma(\Xx_\Rr/\Cc_\Rr/\Rr)$ is the closure of
$\text{Sec}(\Xx_\Rr/\Cc_\Rr/\Rr)$, every $F$-point admits a lift after
base change to an $R'$-point whose generic fiber is in
$\text{Sec}(\Xx_\Rr/\Cc_\RR/\Rr)$.  

\mni
The maximal open subscheme of $\Cc_{\Rr'}$ on which $\sigma_{K'}$ is
regular exactly equals the maximal open subscheme $V_{\mc{C}_{\Rr'}}$ 
on which the
restriction $f:\mc{C}_{\Rr'} \cap f^{-1}(V_{\mc{C}_{\Rr'}}) \to
V_{\mc{C}_{\Rr'}}$ is an isomorphism. 
Since $f$ is proper, by the valuative criterion of
properness, the open
$V_{\mc{C}_{\Rr'}}$ 
contains every codimension $1$ point of
$\Cc_{\Rr'}$ at which $\Cc_{\Rr'}$ is normal.  In particular, it contains
every codimension $1$ point of the base change
$\Cc_{\Rr,\text{sm}}\otimes_{\Rr} \Rr'$, since this is even regular.  In
particular, $\mc{C}_{F'} \to \Cc_{F'}$ is an isomorphism over every generic
point contained in $\Cc_{F,\text{sm}}\otimes_F F'$.  But this is a
property that is preserved by flat base change.  Therefore also $\mc{C}_F
\to \Cc_F$ is an isomorphism over every generic point contained in
$\Cc_{F,\text{sm}}$, i.e., the open subscheme $V_{\mc{C}_F}$ of $\Cc_F$ 
contains every generic point of
$\Cc_{F,\text{sm}}$.  
\end{proof}

\begin{hyp} \label{hyp-lift-b} \marpar{hyp-lift-b}
Let $\Cc_\FR$ be a smooth, projective, geometrically connected
$\FR$-curve.  Let $f_\FR:\Xx_\FR\to \Cc_\FR$ be a projective morphism
that satisfies the acyclic hypothesis.  
\end{hyp}

\mni
Assuming Hypothesis \ref{hyp-lift-b},
denote by $BQ$ the classifying stack over
$\Cc_\FR$.  Denote by $BQ_{\Cc_\FR/\FR}$ the Hom stack,
$\text{Hom}_{\FR}(\Cc_\FR,BQ)$.  This is a gerbe over
the coarse moduli space $|BQ_{\Cc_\FR/\FR}|$.  The coarse moduli space
is a smooth group $\FR$-scheme.  

\mni
Denote by
$\Lb(Q/\Cc_\FR/\FR)$ the \'{e}tale group $\FR$-scheme that is the 
 pushforward from $\Cc_K$ 
of the
cocharacter lattice of $Q$.  
For every field extension $L/K$, the finitely generated
Abelian group
$\Lb(Q/\Cc_K/K)(L)$ is
the group completion of 
the monoid of
$\text{Gal}(L(\Cc_L))$-invariant 
elements of the Mori cone of the geometric
generic fiber of $f_L$.

\begin{hyp} \label{hyp-positiveMori} \marpar{hyp-positiveMori}
Assume \ref{hyp-lift-b} and
that the geometric generic of $f_\FR:\Xx_\FR\to \Cc_\FR$ has
simplicial Mori cone.
\end{hyp}

\begin{defn} \label{defn-positiveMori} \marpar{defn-positiveMori}
Under Hypothesis \ref{hyp-positiveMori},
denote by $\Lb^+(Q/\Cc_K/K)$ the finitely generated, saturated, 
positive structure on
$\Lb(Q/\Cc_K/K)$ that is the Galois-invariant part of the Mori cone.
Denote by $\theta \in \Lb(Q/\Cc_K/K)(K)$ the
Galois-invariant element that is the sum
of the simplicial
generators of the Mori cone. 
For every $K$-point $e$
of $\Lb(Q/\Cc_K/K)$, the translate by $e$ of $\Lb^+(Q/\Cc_\FR/\FR)$ is
$\Lb^{\geq e}(Q/\Cc_\FR/\FR)$.
Denote by $|e|'$ the smallest non-negative
integer such that $|e|'\theta -e$ is an element of
$\Lb^+(Q/\Cc_K/K)(K)$.  
\end{defn}

\mni
There is a smooth, surjective morphism of smooth group $\FR$-schemes,
$$
\text{deg}_{Q/\Cc_\FR/\FR}:|BQ_{\Cc_\FR/\FR}| \to \Lb(Q/\Cc_\FR/\FR),
$$
whose kernel $|BQ^0_{\Cc_\FR/\FR}|$ is an extension of a commutative,
finite, \'{e}tale group 
scheme by an Abelian variety.  

\begin{defn} \label{defn-WC} \marpar{defn-WC}
The fiber $|BQ^\theta_{\Cc_K/K}|$ is a torsor over $\SP K$ for the
group $K$-scheme $|BQ^0_{\Cc_K/K}|$, 
cf. \cite{LangTate}.  
Denote by $m=m(\Xx_K/\Cc_K/K)$ the 
\emph{period} of
$|BQ^\theta_{\Cc_K/K}|$ in the Weil-Chatelet group of the Abelian
$K$-variety $|BQ^\theta_{\Cc_K/K}|$. 
\end{defn}

\mni
If $\Cc_K$ is a geometrically integral curve of genus $g(\Cc_K)>1$,
resp. $g(\Cc_K)=0$, then the period divides
$m_0(2g(\Cc_K)-2)$, resp. $2m_0$, 
where $m_0\ |\ \rho!$ is the order of
the automorphism group of the Mori cone of the geometric generic fiber
of $f_\FR$. 

\begin{notat} \label{notat-ebound} \marpar{notat-ebound}
For every $K$-point $e$ of $\Lb(Q/\Cc_K/K)$, denote by $|e|$ the
least non-negative integer such that $m|e| \geq |e|'$. 
Then $m|e|$ is the least non-negative integer $\ell$ such that
both $\ell\theta-e$ is a $K$-point of $\Lb^+(Q/\Cc_K/K)$ and
$|BQ^{\ell\theta}_{\Cc_K/K}|$ has a $K$-point.
\end{notat}

\begin{hyp} \label{hyp-Sec} \marpar{hyp-Sec}
Assume \ref{hyp-lift-b} and existence of a 
universal torsor $\mcT_\FR$ on $\Xx_\FR$.
\end{hyp}

\begin{notat} \label{notat-Sec} \marpar{notat-Sec} 
For every universal $Q$-torsor $\mcT_\FR$ on $\Xx_\FR$,
denote by 
$$
\alpha_{\mcT_K}:\Sec(\Xx_K/\Cc_K/K) \to |BQ_{\Cc_K/K}|
$$  
the \emph{Abel map},
$[\sigma] \mapsto [\sigma^*\mcT_K]$.  Denote by $\text{deg}_{\mcT_K}$
the composition $\text{deg}_{Q/\Cc_K/K}\circ \alpha_{\mcT_K}$; this is
the \emph{degree map}.
For every point $K$-point $e$ of
$\Lb(Q/\Cc_\FR/\FR)$, denote by $\Sec^e(\Xx_K/\Cc_K/K)$ the fiber over
$e$ of $\text{deg}_{\mcT_\FR}$.  Denote by
$\Sigma^e(\Xx_\FR/\Cc_\FR/\FR)$ the closure of
$\Sec^e(\Xx_\FR/\Cc_\FR/\FR)$ in $\Sigma(\Xx_\FR/\Cc_\FR/\FR)$.      
\end{notat}

\begin{defn} \label{defn-Abelseq} \marpar{defn-Abelseq}
Under Hypotheses \ref{hyp-positiveMori} and \ref{hyp-Sec},
a \emph{pseudo Abel sequence}, resp. an \emph{Abel sequence}, 
for $(\Xx_K/\Cc_K/K,\mcT_K)$ is a $K$-point
$e_0$ of $\Lb(Q/\Cc_K/K)$ and a closed subscheme $Z_{\geq e_0}$ of
$\Sigma^{\geq e_0}(Q/\Cc_K/K)$ satisfying the following.
\begin{enumerate}
\item[(i)]
For every geometric point $e$ of $\Lb^{\geq e_0}(Q/\Cc_K/K)$, the
closed subscheme $Z_e\subset \Sigma^e(Q/\Cc_K/K)$ is an irreducible
component parameterizing (among others) a section $\sigma$ whose
normal bundle has vanishing $h^1$ 
even after twisting down by divisors of degree
$2g(\Cc_K)+1$, i.e., the section is \emph{$(g)$-free}.
\item[(ii)] The restricted Abel map is dominant,
and
the geometric generic fiber is integral, resp.
``birationally rationally connected,''
$$
\alpha_{\mcT_K,Z_e}: Z_e\cap\Sec(\Xx_\FR/\Cc_\FR/\FR)    \to |BQ^e_{\Cc_K/K}|.
$$
\item[(iii)] For every $(g)$-free section $\sigma$, for all
integers  $\delta\geq \delta_0$, after attaching to $\text{Image}(\sigma)$ 
at $\delta \rho$ general points $\delta$ curves in fibers of $f_K$ for
each of the $\rho$ minimal, extremal curve
class in the Mori cone of $f_K$, the resulting reducible curve in $\Xx_K$ is
parameterized by $Z_e$ for some $e\in \Lb^{\geq
  e_0}(Q/\Cc_K/K)$.   
\end{enumerate}
A quasi-projective scheme over a characteristic $0$, algebraically
closed field is \emph{birationally rationally connected}
if it is integral and every projective model is rationally connected.
\end{defn}

\begin{hyp} \label{hyp-Abel} \marpar{hyp-Abel}
Assume \ref{hyp-Sec} and existence of an Abel
sequence.  
\end{hyp}

\begin{defn} \label{defn-main-b} \marpar{defn-main-b}
For an integral model that satisfies Hypotheses \ref{hyp-positiveMori}
and \ref{hyp-Abel}, for every integer $e\geq e_0$, define $Z_{\Rr,e}$
to be the closure in $\text{Hilb}_{\Xx_\Rr/\Rr}$ of $Z_e$.  
\end{defn}

\begin{thm} \label{thm-main-b} \marpar{thm-main-b}
For an integral model that satisfies Hypotheses \ref{hyp-positiveMori}
and \ref{hyp-Abel}, for every integer $h\geq |e_0|$, 
for every general
$K$-point $I$ of $|BQ^{h\theta}_{\Cc_\FR/\FR}|$, there exists an
$F$-point $[\mcC_F]$ of $Z_{\Rr,h\theta}$ giving
an $F(\eta)$-point of $\Xx_\eta$ and such that for a
finite, local homomorphism of DVRs, $R\to R'$, $\mcC_F\otimes_F F'$ is
the closed fiber of closed subscheme $\mcC_{R'}\subset \Xx_R\otimes_R
R'$ whose generic fiber is a section of $f$ with Abel image equal to
$I\otimes_\FR \FR'$. 
\end{thm}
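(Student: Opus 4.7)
The plan is to combine the birational rational connectedness of the generic fiber of the Abel map, supplied by condition (ii) of the Abel sequence, with Esnault's theorem on finite-field rational points of varieties with coniveau $\geq 1$, and then use Lemma~\ref{lem-Sigma} to pass from a Hilbert-scheme $F$-point back to an $F(\eta)$-point. First I would extend $I$ to an $\Rr$-point $I_\Rr$ of a spreading $|BQ^{h\theta}_{\Cc_\Rr/\Rr}|$. Indeed, $|BQ^0_{\Cc_\FR/\FR}|$ is proper over $\FR$ (an extension of a finite commutative \'{e}tale group scheme by an abelian variety), so the torsor $|BQ^{h\theta}_{\Cc_\FR/\FR}|$ is also proper; using the smooth locus of $\Cc_\Rr/\Rr$ (which contains $\eta$ and $\Cc_\FR$) the spreading over $\Rr$ remains proper, and the valuative criterion yields a unique lift $I_\Rr$ with closed fiber $I_F\in |BQ^{h\theta}_{\Cc_F/F}|(F)$.

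Next, let $W^{I_\Rr}_\Rr\subset Z_{\Rr,h\theta}$ denote the scheme-theoretic closure of the generic fiber $W^I_\FR=\alpha_{\mcT_\FR,Z_{h\theta}}^{-1}(I)$. By Abel-sequence property (ii), for a general $\FR$-point $I$ the variety $W^I_\FR$ is birationally rationally connected; choose a Hironaka resolution $\wt W^I_\FR\to W^I_\FR$ and, by taking closures in an ambient projective space, extend to a proper flat $\Rr$-model $\wt W^I_\Rr$ equipped with a morphism to $W^{I_\Rr}_\Rr$. The specialization of rational chain connectedness from characteristic $0$ to characteristic $p$ (Koll\'{a}r-Miyaoka-Mori together with the standard mixed-characteristic specialization) implies that the closed fiber $\wt W^I_F$ is rationally chain connected, and in particular any smooth alteration has coniveau $\geq 1$. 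Esnault's theorem \cite{Esnaultpadic} then produces an $F$-point, whose image in $W^{I_F}_F\subset (Z_{\Rr,h\theta})_F$ is the desired $F$-point $[\mcC_F]$.

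Finally, I would apply Lemma~\ref{lem-Sigma} to $[\mcC_F]\in \Sigma(\Xx_\Rr/\Cc_\Rr/\Rr)(F)$: there is a finite, local extension $\Rr\to \Rr'$ and an $\Rr'$-flat $\mcC_{\Rr'}\subset \Xx_{\Rr'}$ which is the closure of $\sigma_{K'}(\Cc_{K'})$ for a section $\sigma_{K'}$, and $V_{\mcC_F}$ contains every generic point of $\Cc_{F,\mathrm{sm}}$ and hence $\eta$; the resulting isomorphism $\mcC_F\cap f^{-1}(\eta)\cong \eta$ over $F(\eta)$ yields the required $F(\eta)$-point of $\Xx_\eta$. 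The Abel image of $\sigma_{K'}$ equals $I\otimes_\FR \FR'$: by $\Rr'$-flatness of $\mcC_{\Rr'}$ and continuity of the Abel map, the generic section factors through the fiber of the extended Abel map over $I_\Rr\otimes_\Rr \Rr'=I\otimes_\FR \FR'$.

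I expect the principal obstacle to be the specialization step: one must construct an $\Rr$-model of the resolution $\wt W^I_\FR$ whose closed fiber is amenable to Esnault's theorem, i.e.\ with good enough singularities (or replaced by a smooth alteration) so that the rational chain connectedness inherited from the generic fiber translates into the coniveau hypothesis. A secondary technical point is the spreading $|BQ^{h\theta}_{\Cc_\Rr/\Rr}|$, which one handles by restricting to the smooth locus $\Cc_{\Rr,\mathrm{sm}}$ that already contains $\eta$ and $\Cc_\FR$.
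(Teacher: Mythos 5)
There is a genuine gap, and it sits exactly where you predicted: the application of Esnault's theorem. You try to verify a hypothesis on the \emph{closed} fiber of your $\Rr$-model (rational chain connectedness specialized to characteristic $p$, then coniveau $\geq 1$ of a smooth alteration), but \cite[Corollary 1.2]{Esnaultpadic} is designed precisely so that no hypothesis on the closed fiber is needed: its input is a proper, flat $\Rr$-scheme whose geometric \emph{generic} fiber is smooth, integral, and has coniveau $\geq 1$, and its output is an $F$-point of the (possibly very singular) closed fiber. Your route as written does not close: an $F$-point of a smooth alteration of $\wt{W}^I_F$ only descends to a point of $\wt{W}^I_F$ over a finite extension $F'/F$, and the point-count congruence for the alteration does not transfer down. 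The correct order of operations is to work entirely on the generic side: the fiber of the (resolved) Abel map over $I$ is a smooth, projective, rationally connected $\FR$-variety, hence has coniveau $\geq 1$ by \cite[Corollary 1.2]{Esnault_integrality}; its closure in the $\Rr$-model of $Z_{h\theta}$ is then a proper flat $\Rr$-scheme to which \cite{Esnaultpadic} applies directly, with no discussion of the closed fiber's geometry.

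Two further points. First, property (ii) of an Abel sequence only controls the geometric \emph{generic} fiber of $\alpha_{\mcT_K,Z_e}$, whereas you need smoothness, irreducibility and rational connectedness of the fiber over the particular $K$-point $I$. This is obtained by resolving $Z_e$, invoking generic smoothness in characteristic $0$ to produce a dense open $U_e\subset |BQ^{e}_{\Cc_K/K}|$ over which the resolved Abel map is smooth and projective, and then applying the deformation-invariance of rational connectedness in smooth projective families \cite{KMM}, \cite[Theorem IV.3.11]{K} to conclude for \emph{every} geometric fiber over $U_e$; you skip this step. Second, to guarantee that the $F$-point produced by Esnault deforms (after a finite local base change $\Rr\to\Rr'$) to an honest section with Abel image $I\otimes_\FR \FR'$, one must first arrange that $\nu^{-1}(\Sec^e(\Xx_K/\Cc_K/K)\cap Z_e)$ is dense in the fiber over $I$; your appeal to flatness and ``continuity of the Abel map'' presupposes this density rather than establishing it. The preliminary spreading of $|BQ^{h\theta}_{\Cc_\FR/\FR}|$ to an $\Rr$-scheme is unnecessary (and delicate, since $\Cc_F$ need not be smooth); taking closures inside $\text{Hilb}_{\Xx_\Rr/\Rr}$ avoids it entirely.
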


\begin{proof}
By Hironaka's resolution of
singularities in characteristic $0$ and by generic smoothness in
characteristic $0$, there exists a closed subscheme $Y_{K,e}\subset
Z_{e}$ such that for the associated blowing up,
$\nu_{K,e}:\tilde{Z}_{e}\to Z_{e}$, the composition
$$
\alpha_{\mcT_K}|_{Z_{K,e}}\circ \nu_{K,e}:\tilde{Z}_{e} \to Z_e \dashrightarrow
|BQ^e_{\Cc_K/K}| 
$$
is everywhere regular, and this regular morphism is
smooth over a dense open subset $U_e$
of $|BQ_{\Cc_K/K}|$.  
After further shrinking, we may assume that the inverse image in
$\tilde{Z}_{e}$ 
of
the dense open subset $\Sec^e(\Xx_K/\Cc_K/K)\cap Z_{e}$ has nonempty
intersection with the fiber of $\alpha_{\mcT_K}$ over every point of
$U_e$.  
Denote this composition by $\tilde{\alpha}_{\mcT_K}$, and denote by
$\tilde{Z}_{U_e}$ the inverse image of $U_e$ under this morphism,
$$
\tilde{\alpha}_{\mcT_K,U_e}:\tilde{Z}_{U_e}\to U_e.
$$  

\mni
By construction, 
$\tilde{\alpha}_{\mcT_K,U_e}$ is projective.  By construction,
$\tilde{\alpha}_{\mcT_K,U_e}$ is smooth.  Finally, since the
geometric generic fiber of $\alpha_{\mcT_K}|_{Z_{e}}$ is nonempty,
irreducible and birationally rationally connected, also the
geometric generic fiber of
$\tilde{\alpha}_{\mcT_K,U_e}$ is integral and rationally connected.  Since
$\tilde{\alpha}_{\mcT_K,U_e}$
is a smooth, projective morphism of
varieties over a characteristic $0$ field whose geometric generic
fiber is irreducible and rationally connected, in fact \emph{every}
geometric fiber is irreducible and rationally connected,
cf. \cite[2.4]{KMM}, \cite[Theorem IV.3.11]{K}.
By \cite[Corollary 1.2]{Esnault_integrality}, it follows that every
geometric fiber has coniveau $\geq 1$.

\mni
Denote by $Y_{\Rr,e}$ the closure of $Y_{e}$ in $Z_{\Rr,e}$, and denote by
$\nu_{\Rr,e}:\tilde{Z}_{\Rr,e} \to Z_{\Rr,e}$ the associated blowing up.
Let $I$ be a $K$-point of $U_e$.  Denote by $\tilde{Z}_{I}$ the
inverse image of $I$ under $\tilde{\alpha}_{\mcT_K}$, and
denote by $\tilde{Z}_{\Rr,I}$ the closure of
$\tilde{Z}_{K,I}$ in $\tilde{Z}_{\Rr,e}$.  
By the previous paragraph, $\tilde{Z}_{I}$ is a smooth, projective
$K$-scheme that is geometrically irreducible and has coniveau $\geq 1$.

\mni
As the closure of $\tilde{Z}_{I}$
in a projective ${\Rr}$-scheme,
also $\tilde{Z}_{\Rr,I}$ is a projective ${\Rr}$-scheme.  
Since 
$\tilde{Z}_{I}$ is
a projective $K$-scheme, also 
the $K$-fiber of the closure
$\tilde{Z}_{\Rr,I}$ equals $\tilde{Z}_{I}$.  Since ${\Rr}$
is a DVR, the ${\Rr}$-scheme $\tilde{Z}_{\Rr,I}$
is flat if and only if every associated point dominates the generic
point
$\SP(K)$ of $\SP(\Rr)$,
and this holds since $\tilde{Z}_{\Rr,I}$ equals the closure of its
generic fiber.  Thus $\tilde{Z}_{\Rr,I}$ is a proper, flat ${\Rr}$-scheme
whose geometric generic fiber is smooth, integral, and has coniveau
$\geq 1$.    
Thus, by \cite[Corollary 1.2]{Esnaultpadic}, the fiber
$\tilde{Z}_{F,I}$ over the finite residue field $F$ has an $F$-rational
point $[\mc{C}_F]$. 

\mni 
Finally, since $I$ is in $U_e$, the fiber
$\tilde{Z}_{I}$ is an integral scheme that intersects the
open set $\nu_{K,e}^{-1}(\Sec^e(\Xx_K/\Cc_K/K))$.  Thus this open set is
dense in $\tilde{Z}_{I}$, and hence also in
$\tilde{Z}_{\Rr,I}$.  So there exists a finite type,
flat, local homomorphism of DVRs, ${\Rr}\to
\Rr'$, whose field extensions $K\subset K'$ and $F\subset F'$ are
finite, and there exists an ${\Rr}$-morphism $e:\SP(\Rr')\to
\tilde{Z}_{\Rr,I}$ mapping $\SP(F')$ to the point $[\mcC_F]$
and mapping $\SP(K')$ into $\nu_{K,e}^{-1}(\Sec^e(\Xx_K/\Cc_K/K))$.  The
composition of $e$ with $\nu$ gives a deformation after base change
whose closed fiber is $\mc{C}_F\otimes_F F'$ and whose generic fiber
maps to $I$ under $\alpha_{\Ll}$.  

\mni
Finally, by the definition of $|e_0|$, for every integer $h\geq
|e_0|$, for $e=h\cdot m \cdot \theta$, 
$e$ is an element of $\Lb^{\geq e_0}(Q/\Cc_K/K)(K)$ such that
$|BQ^{e}_{\Cc_K/K}|$ has a $K$-point.
Since $K$ is the fraction field of a Henselian DVR, by Hensel's Lemma,
every smooth
$K$-variety that has a $K$-point has a Zariski dense collection of
$K$-points. 
In particular, the dense open subscheme $U_e$ of
$|BQ^e_{\Cc_K/K}|$ has a $K$-point, $I$.  By the above,
there exists an $F$-point 
$[\mc{C}_F]\in \Sigma^e(\Xx_{\Rr}/\Cc_{\Rr}/{\Rr})(F)$ and a
deformation after base change $(\Rr\to \Rr',\mc{C}_{\Rr'})$ such that
$\alpha_{\mcT}(\sigma_{K'})$ equals $I\otimes_K K'$.
Finally, by Lemma \ref{lem-Sigma}, there exists an open subscheme
$V_{\mc{C}_F}$ of $\Cc_F$ containing every generic point of
$\Cc_{F,\text{sm}}$ and over which $f|_{\mc{C}_F}:\mc{C}_F \to \Cc_F$ is an
isomorphism.  Inverting this isomorphism gives a section of $f$ over
$V_{\mc{C}_F}$, and hence gives an $F(\eta)$-rational point of
$\Xx_{F(\eta)}$ for every generic point $\eta$ of $\Cc_F$.  
\end{proof}


\section{Integral Extensions} \label{sec-extend}
\marpar{sec-extend}

\mni
For each of the four special cases, we explain how to find integral
extensions whose generic fiber is a rationally simply connected
fibration over a curve.  Combined with Theorem \ref{thm-main-b}, this
produces rational points.


\subsection{Projective Homogeneous Spaces} \label{subsec-PHS}
\marpar{subsec-PHS}

\begin{defn} \label{defn-gs} \marpar{defn-gs}
For a stack $\mcM$ over a scheme $S$,
a \emph{generic splitting variety} of $\mc{M}$ (ala Amitsur)
is a pair $(M,\zeta_M)$ of a
locally finitely presented $S$-scheme $M$ and a $1$-morphism over $S$,
$$
\zeta_M:M\to \mc{M},
$$ 
such that for
every field $E$ and for every $1$-morphism, $\zeta:\SP E \to
\mc{M}$, there exists a morphism $z:\SP E \to M$ with
$\zeta_M\circ z$ equivalent to $\zeta$.  
\end{defn}

\mni
For every fppf group $S$-scheme $G_S$, the \emph{classifying stack}
$BG_S$ is the stack over the fppf site of $S$-schemes whose fiber
category over an $S$-scheme $T$ has as objects the $G_S$-torsors
over $T$ and has as morphisms the $G_S$-equivariant $T$-morphisms of
torsors \cite[Proposition 10.13.1]{LM-B}.
A smooth, affine group scheme $G_{S,0}$ over $S$ is a \emph{reductive
  group scheme} if the geometric fibers are connected and
(geometrically) reductive, i.e., the unipotent radical is trivial.  A
flat, affine group scheme $G_S$ over $S$ is \emph{finite-by-reductive}
if there exists a surjective, smooth morphism of group $S$-schemes
$$
G_S \twoheadrightarrow \pi_0 G_S
$$
where $\pi_0 G_S$ is a finite, flat group $S$-scheme and the kernel,
$G_{S,0}$, is a reductive group $S$-scheme.

\begin{thm}\cite{dJS8} \cite[Section 16]{dJHS} \label{thm-SdJ}
  \marpar{thm-SdJ} 
Let $R$ be a DVR.
For every ``finite-by-reductive'' group $R$-scheme, $G_R$,  
for every integer
$c\geq 1$, there exists a projective, fppf $R$-scheme $\ol{M}$, an
open subscheme $M$ whose closed complement $\partial \ol{M}$ has
codimension $>c$ in every fiber $\ol{M}_s$,
and a
$1$-morphism $\zeta_M:M\to BG_S$ that is a generic splitting variety.
\end{thm}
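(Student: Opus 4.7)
The plan is to construct $\ol{M}$ as a $G_R$-equivariant approximation of the classifying stack $BG_R$ in the spirit of the Bogomolov-Schechtman-Totaro approximation of classifying spaces, with extra care taken to ensure the codimension bound holds uniformly in every fiber over $R$ and that the final object is projective over $R$.

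First, I would produce a faithful linear representation $G_R \hookrightarrow \mathbf{GL}(V)$ on a finite free $R$-module $V$. For the reductive part $G_{R,0}$ this follows from the standard structure theory of reductive group schemes over a DVR, and the finite flat quotient $\pi_0 G_R$ can be incorporated by passing to an induced or direct-sum representation of the whole extension. Next, for a parameter $N$, take the diagonal action of $G_R$ on $W_N := V^{\oplus N}$ and let $U_N \subset W_N$ be the open locus where the stabilizer scheme is trivial. Since the non-free locus in each fiber of $W_N$ has codimension growing linearly in $N$, one can choose $N$ large enough that $W_N \setminus U_N$ has codimension strictly greater than $c$ in both the generic fiber and the special fiber of $W_N \to \SP R$. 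The action of $G_R$ on $U_N$ is then free, so the quotient $M_0 := U_N / G_R$ exists as a quasi-projective $R$-scheme, the projection $U_N \to M_0$ is a $G_R$-torsor, and this torsor defines the morphism $\zeta_M : M_0 \to BG_R$.

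Second, to produce the projective compactification $\ol{M}$ with $\partial \ol{M}$ of codimension $> c$ in every fiber, one realizes $M_0$ as an open subscheme of a natural projective moduli $R$-scheme whose boundary has controlled codimension fiberwise --- for instance, a Grassmannian or Hilbert scheme of $G$-invariant configurations into which $M_0$ embeds --- rather than producing $\ol{M}$ by equivariant blow-ups, which could behave unevenly across the generic and special fibers. With such an $\ol{M}$ in hand, take $M \subset \ol{M}$ to be the open subscheme corresponding to $M_0$; the codimension bound on $\partial \ol{M}$ is inherited from the codimension bound on $W_N \setminus U_N$.

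Finally, the generic splitting property is verified directly: for any field $E$ and any $G_E$-torsor $P \to \SP E$, the associated vector bundle $P \times^{G_R} W_N$ over $\SP E$ admits sections landing in the free locus for $N$ sufficiently large, and any such section produces an $E$-point of $M$ whose associated torsor is isomorphic to $P$. The hard part of the argument, carried out in detail in \cite{dJS8} and \cite[Section 16]{dJHS}, is the uniform control over the DVR $R$: producing a projective compactification whose boundary has codimension $> c$ in \emph{every} fiber of $\ol{M} \to \SP R$ simultaneously, rather than just in the generic fiber, and doing so for a group that is only finite-by-reductive rather than smooth along the special fiber.
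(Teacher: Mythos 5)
Your overall strategy -- approximate $BG_R$ by a free quotient of a large representation, then check the splitting property on twisted forms -- is the same Totaro-style approach the paper takes, but you leave gaps at exactly the two steps that carry the content of the theorem. First, the compactification. The paper does not embed the free quotient into a Grassmannian or Hilbert scheme; it projectivizes the representation and takes $\ol{M}$ to be the uniform categorical quotient of the semistable locus $(\PP_R V)^{\text{ss}}$ by $G_R$, which Seshadri's GIT over a base guarantees is \emph{projective} over the DVR, with $M$ the open locus over which $(\PP_R V)^o$ is a $G_R$-torsor. The fiberwise codimension bound on $\partial\ol{M}$ then comes from arranging that the complement of the free locus \emph{inside the semistable locus} has codimension $>c$ in every fiber. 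Your claim that ``the codimension bound on $\partial\ol{M}$ is inherited from the codimension bound on $W_N\setminus U_N$'' is not justified for an unspecified compactification: the boundary of a closure in a Hilbert scheme need bear no relation to the non-free locus, and codimension is not in general preserved under a non-equidimensional quotient map. You have correctly identified this as the hard point, but you have not supplied the mechanism (GIT over $R$ applied to the projectivized representation) that actually resolves it.

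Second, the generic splitting property. For a fixed $N$ and an arbitrary field $E$, the twisted form of the free locus is a nonempty open subscheme of an affine space over $E$, but a nonempty open subscheme of $\AAA^n_E$ need not have an $E$-point when $E$ is a small finite field; ``admits sections landing in the free locus for $N$ sufficiently large'' therefore needs an argument that is uniform in $E$ once $N$ (equivalently, $M$) is fixed. The paper's proof avoids this entirely by a careful choice of representation: $V=\text{Hom}_R(R^{\oplus c+1},\text{Hom}(W_0,W))$, whose twist by any $\zeta:\SP E\to BG_R$ is $\text{Hom}_E(E^{\oplus c+1},\text{Hom}_E(W_0,W))$ for honest $E$-vector spaces $W_0,W$ of equal rank, and whose free locus contains the tuples of isomorphisms $W_0\to W$ -- a locus that has $E$-points over every field because equidimensional $E$-vector spaces are isomorphic. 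Without such a choice (or a counting argument controlling the degree of the non-free locus relative to its growing codimension), your final step does not close.
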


\begin{proof}
There exists a linear representation of $G_R$ on a finite free
$R$-module $V$ whose induced action on $\PP_R V$ is free on a dense
open subset $(\PP_R V)^o$ whose complement in the semistable locus
$(\PP_R V)^{\text{ss}}$ has codimension $>c$ in every fiber over $\SP R$.  By
\cite{Sesh}, the uniform categorical quotient of the action of $G_R$
on $(\PP_R V)^{\text{ss}}$ is a projective $R$-scheme $\ol{M}$, and
there exists a unique open subscheme $M$ such that $(\PP_R V)^o$ is a
$G_R$-torsor over $M$.  This defines $\zeta_M:M\to BG_R$.  

\mni
By construction, $\ol{M}$ is normal and integral.  Since it is also
projective over the DVR $R$, and since it dominates the generic point
$\SP \FR$, $\ol{M}$ is $R$-flat.  Since the complement of 
$(\PP_R V)^o$ has codimension $>c$, also $\partial \ol{M}$ has
codimension $>c$.  Since $(\PP_R V)^\circ$ is $R$-smooth, and since
$(\PP_R V)^o\to M$ is flat, also $M$ is $R$-smooth, 
\cite[Proposition 17.7.7]{EGA4}.

\mni
For every $\zeta:\SP E \to BG_R$, the $2$-fibered product
$M\times_{\zeta_M,\zeta} \SP E$ with its projection to $\SP E$ is
is $(\PP_E V_\zeta)^o$ where
$V_\zeta$ is the $E$-vector space 
$\text{Hom}_E(E^{\oplus c+1},\text{Hom}_E(W_0,W))$ for finite free
$E$-vector spaces $W_0$, $W$ of equal (positive) rank.  The free locus
contains the locus parameterizing $(c+1)$-tuples
$(\lambda_0,\dots,\lambda_c)$ 
of
$E$-linear maps $\lambda_i:W_0\to W$ that are isomorphisms.  This locus has
$E$-rational points.  Thus, there exists an $R$-morphism $\lambda:\SP
E \to M$ and a $2$-equivalence of the composition $\zeta_M\circ
\lambda$ with $\zeta$.  
\end{proof}

\mni
Let $H_S$ be a split, connected and simply connected, semisimple group
scheme over  
$S$.  Let $P_S\hookrightarrow H_S$ be a standard parabolic subgroup
scheme (containing a specified Borel).  This has a Levi decomposition.
Denote by $\chi:P_S\to Q_S$ the multiplicative quotient, i.e., the
quotient of $P_S/R_u(P_S)$ by the commutator subgroup
$S$-scheme.  Consider the left regular action of $P_S$ on $H_S$,
$p\cdot h = ph$.  Consider the diagonal left action of $P_S$ on
$Q_S\times_S H_S$, $p\cdot (q,h) = (q\chi(p)^{-1},ph)$.  The
projection morphism $\pr{2}:Q_S\times_S H_S \to H_S$ is
$P_S$-equivariant.  Finally, the left regular $Q_S$-action,
$$
Q_S \times_S (Q_S\times_S H_S) \to Q_S\times_S H_S, \ q'\cdot (q,h) = (q'q,h),
$$
is $P_S$-equivariant.  
By fppf descent, there are affine morphisms
$$
H_S \to P_S\backslash H_S, \ \ Q_S\times_S H_S \to P_S\backslash
(Q_S\times_S H_S)
$$
that are $P_S$-torsors.  
The $P_S$-equivariant morphisms above
induce morphisms
$$
\pr{2}:P_S\backslash (Q_S\times_S H_S) \to P_S\backslash H_S,
$$
$$
Q_S\times_S P_S\backslash (Q_S\times_S H_S) \to P_S\backslash (Q_S\times_S H_S).
$$
Moreover, the scheme
$P_S\backslash H_S$ is projective over $S$: the dual of the relative
dualizing sheaf is relatively very ample, cf. \cite[p. 186]{Demazure-Aut}.
The other schemes are
affine over $P_S\backslash H_S$, hence can be constructed by fpqc
descent for affine schemes.
Altogether these quotients and 
morphisms make $\mcT_S := P_S\backslash (Q_S\times_S
H_S)$ into a 
(left) $Q_S$-torsor over $P_S\backslash H_S$.  

\begin{defn} \label{defn-PHS} \marpar{defn-PHS}
For the pair
$(H_S,P_S)$ as above, the associated \emph{projective homogeneous scheme} is
$X_{H,P} = P_S\backslash H_S.$  
\end{defn}

\begin{lem}\cite[Construction 5.3]{Zhu} \label{lem-UT} \marpar{lem-UT}
The Picard torus of $X_{H,P}$ over $S$ is $Q_S$ and the Picard lattice
is the character lattice $Q_S^D$.
The $Q_S$-torsor $\mcT_S$ is a
universal torsor on $P_S\backslash H_S$.
\end{lem}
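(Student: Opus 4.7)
My plan is to separate the lemma into two independent assertions: first, the identification of $\text{Pic}_{X_{H,P}/S}$ with the constant \'{e}tale sheaf $Q_S^D$ (and consequently of the Picard torus with $Q_S$); and second, the verification that the specific $Q_S$-torsor $\mcT_S = P_S\backslash(Q_S\times_S H_S)$ is universal. Both computations are stable under base change on $S$, so after \'{e}tale base change I may reduce to the case that $S = \SP k$ for an algebraically closed field and that $H_S$, $P_S$, $Q_S$ are all split.

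For the first assertion, I would use the affine $P_S$-torsor $\pi: H_S \to X_{H,P} = P_S\backslash H_S$. Since $H_S$ is split simply connected semisimple, $X^*(H_S) = 0$ and $\text{Pic}(H_{\bar{s}}) = 0$ on every geometric fiber. The standard equivariant descent exact sequence for the $P_S$-torsor $\pi$ then collapses to an isomorphism $X^*(P_S) \xrightarrow{\sim} \text{Pic}(X_{H,P})$, $\chi \mapsto \mcL_\chi$, where $\mcL_\chi$ is the line bundle obtained by descending the trivial $P_S$-equivariant bundle twisted by the character $\chi$. Since $H_S$ is semisimple simply connected, every character of $P_S$ is trivial on $[P_S,P_S]$ and on $R_u(P_S)$ (a unipotent group has no characters), so $X^*(P_S) = X^*(Q_S) = Q_S^D$. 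This identifies $\text{Pic}_{X_{H,P}/S}$ as the constant \'{e}tale sheaf $Q_S^D$, and its Cartier dual, i.e.\ the Picard torus, is $Q_S$ (since $Q_S$ is already a torus with character lattice $Q_S^D$).

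For the second assertion, I recall that under the chain of natural isomorphisms
$$H^1_{\text{fppf}}(X_{H,P}, Q_S) \;=\; \text{Hom}(Q_S^D, H^1_{\text{fppf}}(X_{H,P}, \Gm{m})) \;=\; \text{Hom}(Q_S^D, \text{Pic}(X_{H,P})),$$
a $Q_S$-torsor $\mcT$ is universal exactly when its image is the identity of $\text{Pic}(X_{H,P}) = Q_S^D$, that is, when for every character $\chi: Q_S\to \Gm{m}$ the pushout $\chi_*\mcT$ equals $\mcL_\chi$. I would compute $\chi_*\mcT_S$ directly from the construction. By definition, $\mcT_S$ arises by $P_S$-descent (along $\pi$) of the $Q_S$-torsor $Q_S\times_S H_S \to H_S$ on which $P_S$ acts diagonally via $p\cdot(q,h) = (q\chi(p)^{-1}, ph)$; thus $\chi_*\mcT_S$ is the descent along $\pi$ of the $\Gm{m}$-torsor on $H_S$ twisted by the $P_S$-action $p\cdot(t,h) = (t\chi(p)^{-1}, ph)$. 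This is by construction the $\Gm{m}$-torsor $\mcL_\chi^{\times}$ underlying $\mcL_\chi$. Hence $\chi_*\mcT_S = \mcL_\chi$ for every $\chi$, and the classifying map $Q_S^D \to \text{Pic}(X_{H,P})$ coincides with the isomorphism of the first part, which is the identity after the identifications are made.

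The one step I expect to require the most care is keeping the left/right and sign conventions consistent in the second paragraph, because the character $\chi$ in the diagonal $P_S$-action has been normalized with an inverse so that $\mcT_S$ is a \emph{left} $Q_S$-torsor; a sign error here would change the resulting map $Q_S^D\to \text{Pic}(X_{H,P})$ by $-1$ and destroy universality. The remaining content --- representability of the Picard functor, triviality of characters and of Picard of a split simply connected semisimple group, and the identification of descended line bundles with equivariant characters --- is standard for projective homogeneous spaces.
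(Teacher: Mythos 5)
The paper does not actually prove this lemma: it is quoted verbatim from \cite[Construction 5.3]{Zhu}, so there is no in-text argument to compare yours against. Your proof is the standard one and is correct. The identification $X^*(P_S)\cong\operatorname{Pic}(P_S\backslash H_S)$ via the exact sequence $X^*(H_S)\to X^*(P_S)\to \operatorname{Pic}(P_S\backslash H_S)\to \operatorname{Pic}(H_S)$, using that a split simply connected semisimple group has trivial character group and trivial Picard group, together with $X^*(P_S)=X^*(Q_S)=Q_S^D$ (characters kill $R_u(P_S)$ and the commutator subgroup), is exactly the expected argument; and your verification that $\chi_*\mcT_S=\mcL_\chi$ is immediate from the paper's explicit descent construction of $\mcT_S=P_S\backslash(Q_S\times_S H_S)$ with the twisted diagonal action, since pushing out along $\chi$ reproduces the associated line bundle of the character $\chi\circ(\text{mult.\ quotient})$ of $P_S$. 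Two small remarks: the reduction ``after \'etale base change to the split case'' is vacuous here because $H_S$, $P_S$, $Q_S$ are split by hypothesis in this section (what you do need, and implicitly use, is that $\operatorname{Pic}_{X_{H,P}/S}$ is the constant sheaf on the lattice, which follows from $h^1(X,\OO_X)=0$ and constancy of the fiberwise computation); and you are right that the only delicate point is the sign normalization $p\cdot(q,h)=(q\chi(p)^{-1},ph)$, which is precisely what makes the type map the identity rather than $-1$.
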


\mni
Since $H_S\to X_{H,P}$ is a $P_S$-torus, since $P_S$ is $S$-flat, and
since $H_S$ is $S$-smooth, $X_{H,P}$ is $S$-smooth, 
\cite[Proposition 17.7.7]{EGA4}.

\begin{notat} \label{notat-rho} \marpar{notat-rho}
Denote by $\rho_X$ the unique morphism of group $S$-schemes, 
$$
\rho_X:Q_S \to \Gm{m,S},
$$
such that the associated $\Gm{m,S}$-torsor $\rho_{X,*}\mcT_S$ is the
$\Gm{m,S}$-torsor of the very ample invertible sheaf that is the dual
of the relative dualizing sheaf.  
\end{notat}
     
\mni
The automorphism group scheme of the pair
$(X_{H,P},\omega_{X/S}^\vee)$ is an affine group $S$-scheme, cf.
\cite[Section 2.1]{dJS8}.  This is  
essentially just \cite[No. 221, Section 4.c]{FGA}, \cite[Corollaire
7.7.8]{EGA3}, \cite[Th\'{e}or\`{e}me 4.6.2.1]{LM-B}.  The same method
proves that the 
automorphism group scheme $G'_S$ of the pair $(X_{H,P},\mcT_S)$ is an affine
group $S$-scheme.  Since $\mcT_S$ is intrinsic, the forgetful morphism
$$
\text{Aut}_S(X_{H,P},\mcT_S) \to \text{Aut}_S(X_{H,P})
$$
is surjective.  This group scheme is smooth, and the 
identity component of the target is a semisimple
group $S$-scheme, \cite[Proposition 4]{Demazure-Aut}.  The kernel of
the forgetful morphism is
$Q_S$ with its induced action on $\mcT_S$.  Thus, $G'_S$ is a smooth,
affine group $S$-scheme whose identity component $G_{S,0}$ is a
reductive group scheme.

\mni
The quotient $G'_S/G_{S,0}$ is a quasi-finite, \'{e}tale group scheme
$\pi_0 G'_S$ over $S$.  
When $S$ is $\SP R$ for a Henselian local ring, there is a
unique closed subgroup scheme $\pi_0 G'_{S,\text{fin}}$ of 
$\pi_0 G'_S$ that is finite,
\'{e}tale over $S$ and whose closed fiber $\pi_0 G'_{F,\text{fin}}$ equals the
closed fiber $\pi_0 G'_F$.  

 
\mni
By construction $G_S$ is finite-by-reductive.
Denote the induced action of $G'_S$, as a
\emph{right} action
$$
\gamma_X: X_{H,P} \times_S G'_S \to X_{H,P}, \ \
(P_S\cdot h,g) \mapsto (P_S\cdot h)\bullet g.
$$
$$
\gamma_\mcT: \mcT_S \times_S G'_S \to \mcT_S.
$$
Let $G_S\subset G'_S$ be a
closed subgroup scheme that contains $G_{S,0}$ and that is
finite-by-reductive, i.e., $G_{S,0}$ is the kernel of a smooth,
surjective group homomorphism $G_S\to \pi_0 G_S$ with $\pi_0 G_S$
finite and flat.  
The action of $G_S$ on $X_{H,P}$ induces
an action on the Picard lattice $Q_S^D$.  This action factors through
an action of $\pi_0 G_S$ on $Q_S^D$.  

\begin{defn} \label{defn-prim} \marpar{defn-prim}
The triple $(H_S,P_S,G_S)$ of a split, simply connected, semisimple
group $S$-scheme $H_S$, a standard parabolic $P_S$, and a finite-by-reductive
subgroup $S$-scheme of $\text{Aut}_S(X_{H,P},\mcT_S)$
containing the identity component
is \emph{primitive} if the induced action
of $\pi_0 G_S$ on the Picard lattice $Q^D_S$ of $P_S\backslash H_S$ 
has a rank $1$ invariant
sublattice.  
\end{defn}

\mni
Since it is intrinsic, one nonzero element in the invariant lattice is the
character $\rho_X$ giving rise to the ample invertible sheaf
$\omega^\vee_{X/S}$.   Thus, the triple is primitive if and only if
every invariant character is commensurate with $\rho_X$.

\mni 
For every $S$-scheme $T$ and for every left $G_S$-torsor $E_T$ over $T$, 
$$
(\mu,\pr{2}):G_S\times_S E_T \xrightarrow{\cong} E_T\times_T E_T, \ \
(g,x)\mapsto 
(\mu(g,x),x), 
$$
there are induced left actions of $G_S$ as follows,
$$
\tilde{\mu}_X:G_S \times_S (X_{H,P}\times_S E_T) \to
X_{H,P}\times_S E_T, \ \ (g,(x,y)) \mapsto (x\gamma_X(g^{-1}),\mu(g)y), 
$$
$$
\tilde{\mu}_\mcT:G_S \times_S \mcT_S\times_S E_T \to
\mcT_S\times_S E_T, \ \ (g,(q,y))\mapsto (q\gamma_\mcT(g^{-1}),\mu(g)y).
$$
These are free action.  Associated to the $G_S$-invariant cocharacter 
$\rho_X$, the action
$\tilde{\mu}_\mcT$ induces a $G_S$-linearization over $X_{H,P}\times_S
E_T$ of the relatively ample invertible sheaf
$\pr{1}^*\omega_{X/S}^\vee$.  Thus, 
by
fppf descent in the projective case, 
there is
an affine $T$-morphism that is a $G_S$-torsor,
$$
q_{\Xx,E}: \Xx_{H,P}\times_S E_T \to \Xx_E,
$$ 
together with a $G_S$-torsor,
$$
q_{\mcT,E}:\mcT_S \times_S E_T \to \mcT_E,
$$
and a quotient morphism 
$$
\mcT_E \to \Xx_E
$$
that is a $Q_S$-torsor.

\begin{defn} \label{defn-twist} \marpar{defn-twist}
The pair $(\Xx_E,\mcT_E)$ is the \emph{$E$-twist} of $(\Xx_{H,P},\mcT_S)$.  
\end{defn}

\begin{notat} \label{notat-prim} \marpar{notat-prim}
For a triple $(H_S,P_S,G_S)$,
let $(\ol{M},\OO_{\ol{M}}(1),
i:M\to \ol{M})$ be a datum with codimension $>1$
boundary as in Theorem \ref{thm-SdJ}.  For the $1$-morphism
$\zeta_M:M\to BG_S$,
denote by $f_M:\Xx_M \to M$ and $\mcT_M$
the associated twist of
$(\Xx_{H,P},\mcT_S)$.  
\end{notat}

\begin{hyp} \label{hyp-group} \marpar{hyp-group}
Let $S$ be $\SP R$ for a Henselian DVR with finite residue field $F$ and
with characteristic $0$ fraction field $\FR$.
Let $F(\eta)$ be $F(\Cc'_F)$ for a smooth, projective,
geometrically connected $F$-curve $\Cc'_F$.  Let $X_\eta$ be an
$F(\eta)$-scheme that is projective homogeneous.  Assume
that $X_\eta$ has a universal torsor $\mcT_\eta$.
\end{hyp}

\mni
There exists a pair
$(H_S,P_S)$ as above such that $X_{H,P}\otimes_R F(\eta)$ is
geometrically isomorphic to $X_\eta$.  Let $Q_\eta$ be the Picard
torus of $X_\eta$.

\begin{notat} \label{notat-group} \marpar{notat-group}
Denote by $G_S\subset
\text{Aut}_S(X_{H,P},\mcT_{H,P})$ the unique closed subgroup scheme
that is finite-by-reductive, that contains the identity component, and
such that $\pi_0 G_F$ is the group of connected components of
$\text{Aut}_F(X_{H,P},\mcT_{H,P})$.  
\end{notat}

\mni
The Isom scheme from $(X_\eta,\mcT_\eta)$ to $(X_{H,P},\mcT_{H,P})$ is a
left $G_S$-torsor over $F(\eta)$.  The $S$-smooth scheme $M$ from
Theorem \ref{thm-SdJ}, for $c=1$, 
is a generic splitting variety.  Thus, there is
an induced morphism $\zeta:\Cc'_F\to \ol{M}$.  If necessary, replace
$\ol{M}$ by $\ol{M}\times_S \PP^3_S$ so that $\zeta$ is a closed
immersion.  Denote the relative dimension of $\ol{M}$ by $m$.  Denote
by $(\Xx_M,\mcT_M)$ the twist of $(X_{H,P},\mcT_{H,P})$ by the torsor
$\zeta_M:M\to BG_S$.  

\begin{thm}\cite{Zhu} \label{thm-group} \marpar{thm-group}
Assume Hypothesis \ref{hyp-group}.
For every general $(m-1)$-tuple $(D_1,\dots,D_{m-1})$ of very ample
divisors of $\ol{M}$ that contain $\zeta(\Cc'_F)$, the common
intersection $\Cc_R = D_1\cap \dots \cap D_{m-1}$ gives an integral
extension of $X_\eta \to \SP F(\eta)$.  If $(H_S,P_S,G_S)$ is
primitive, then $\Xx_\FR \to \Cc_\FR$ and $\mcT_\FR$ is a rationally
simply connected fibration in the sense of \cite{Zhu}, and the family
has an Abel sequence. 
\end{thm}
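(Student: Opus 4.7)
The plan is to produce $\Cc_R$ via a Bertini-type argument on $\ol{M}$ and then invoke the main theorem of \cite{Zhu} for twisted projective homogeneous fibrations. I proceed in three steps.

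First, I construct $\Cc_R$. Because $\ol{M}$ is projective over $R$, $R$-smooth on $M$, and the complement $\partial\ol{M}$ has codimension at least $2$ in every geometric fiber (this is Theorem~\ref{thm-SdJ} applied with $c=1$), a simultaneous Bertini argument on the generic and special fibers of $\ol{M}/R$ shows that for very ample $D_1,\dots,D_{m-1}$ chosen generically subject to containing $\zeta(\Cc'_F)$, the scheme-theoretic intersection $\Cc_R=D_1\cap\dots\cap D_{m-1}$ is a flat, projective $R$-curve with geometrically integral fibers containing $\zeta(\Cc'_F)$. Since $\zeta(\eta)\in M$ and the relevant linear systems separate tangents away from a proper subvariety, one may further arrange that $\Cc_R$ is smooth at $\zeta(\eta)$ and along the generic fiber $\Cc_\FR$, and that it meets $\partial\ol{M}$ in only finitely many points on each fiber. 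Thus $\eta$ and all of $\Cc_\FR$ lie in $\Cc_{R,\mathrm{sm}}\cap M$, so that the identification $F(\eta)=\kappa(\eta)$ persists.

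Second, I verify the integral extension conditions of Definition~\ref{defn-lift-b}. The pullback of the smooth projective morphism $f_M:\Xx_M\to M$ along $\Cc_R\cap M\hookrightarrow M$ is smooth; closing up its graph in a fixed projective embedding produces the required $f_R:\Xx_R\to\Cc_R$. The fiber over $\eta$ is exactly $X_\eta$: the restriction $\zeta|_\eta:\SP F(\eta)\to M$ classifies, by construction, the $G_S$-torsor $E=\mathrm{Isom}((X_\eta,\mcT_\eta),(X_{H,P},\mcT_S))$, and the $E$-twist of $(X_{H,P},\mcT_S)$ is $(X_\eta,\mcT_\eta)$ by Definition~\ref{defn-twist}. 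Condition~(2) of Definition~\ref{defn-lift-b} holds by smoothness of the pullback at $\eta$, while condition~(1) is built into the Bertini construction. Pulling back $\mcT_M$ along $\Xx_\FR\to\Xx_M$ yields the universal torsor $\mcT_\FR$ on $\Xx_\FR$, which exists by Lemma~\ref{lem-UT}.

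Third, I invoke \cite{Zhu}. The generic fiber $\Xx_\FR\to\Cc_\FR$ is the family of twists of $X_{H,P}$ by the $G_S$-torsor obtained by restriction of $\zeta_M$ along $\Cc_\FR\to M$. Primitivity of $(H_S,P_S,G_S)$ guarantees that the action of $\pi_0 G_S$ on the Picard lattice $Q_S^D$ has a rank-one invariant sublattice, so that $\Lb(Q/\Cc_\FR/\FR)$ has a well-defined line in which the ample class $\rho_X$ lies and Hypothesis~\ref{hyp-positiveMori} is met. Zhu's rational simple connectedness theorem then supplies the $(g)$-free sections, the dominant Abel maps with birationally rationally connected fibers, and the attaching moves required by Definition~\ref{defn-Abelseq}, assembling an Abel sequence $(e_0,Z_{\geq e_0})$ for the pair $(\Xx_\FR/\Cc_\FR/\FR,\mcT_\FR)$.

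The main obstacle is the third step: the inheritance of rational simple connectedness and the assembly of an Abel sequence for the Bertini-curve fibration. This requires not only the content of Zhu's main theorem for the universal twisted homogeneous family over $M$, but also that restriction to a general complete-intersection curve preserves the necessary deformation-theoretic properties of sections and of the Abel map. Here the codimension-$>1$ bound on $\partial\ol{M}$ from Theorem~\ref{thm-SdJ} is essential, both to guarantee that sections and their free attaching partners remain inside the twist $\Xx_M$, and to ensure that general complete intersections avoid the locus over which the fibration degenerates. Without primitivity the Abel map would have disconnected geometric generic fibers, and the rational connectedness statement in Definition~\ref{defn-Abelseq}(ii) would fail.
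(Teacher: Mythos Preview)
Your overall strategy matches the paper's: a Bertini construction of $\Cc_R$ inside $\ol{M}$, restriction of $(\Xx_M,\mcT_M)$, and then an appeal to Zhu's main theorem. The first two steps are fine and, if anything, more explicit than the paper's.

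The gap is in step three. You write that primitivity of $(H_S,P_S,G_S)$ ``guarantees that the action of $\pi_0 G_S$ on the Picard lattice $Q_S^D$ has a rank-one invariant sublattice, so that $\Lb(Q/\Cc_\FR/\FR)$ has a well-defined line in which the ample class $\rho_X$ lies.'' This inference is not automatic. Primitivity controls the $\pi_0 G_S$-invariants of $Q_S^D$, whereas $\Lb(Q/\Cc_\FR/\FR)$ records the invariants under the monodromy representation of $\pi_1(\Cc_\FR)$ acting on the relative Picard. These coincide only if the monodromy over the curve $\Cc_\FR$ is as large as the monodromy over all of $M_\FR$. A priori, a complete-intersection curve could have smaller monodromy image, making the invariant lattice strictly larger than rank one, in which case Zhu's Situation~5.1 (which requires relative Picard rank one) would not apply.

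The paper closes this gap with the Bertini irreducibility theorem: for a \emph{general} complete-intersection curve $\Cc_\FR\subset M_\FR$, the sections of the relative Picard $\text{Pic}_{\Xx_M/M}$ over $\Cc_\FR$ agree with the sections over all of $M_\FR$. Equivalently, the \'etale cover of $M_\FR$ trivializing the Picard local system remains irreducible when pulled back to $\Cc_\FR$. Only after this step does primitivity yield rank one for the restricted family, and only then does Zhu's theorem produce the Abel sequence. Your final paragraph gestures at ``restriction to a general complete-intersection curve preserves the necessary \ldots properties,'' but the specific property at stake---preservation of the Picard monodromy---and the specific tool---Bertini irreducibility---are what make the argument work, and they should be named.
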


\begin{proof}
Since $\Cc'_F$ is smooth, and since the image of $\zeta$ intersects
the smooth open subscheme $M$, there exist very ample divisors
$(D_{1,F},\dots, D_{m-1,F})$ that contain $\zeta(\Cc'_F)$ and whose
common intersection $\Cc_F$ is a curve containing a dense open subset
of $\zeta(\Cc'_F)$ as an open subset.  Since the boundary
$\partial\ol{M}$ has codimension $>1$ in $\ol{M}$, for a general lift
$(D_1,\dots,D_{m-1})$ (inside the complete linear system $\PP^N_\Rr$
over the Henselian ring $\Rr$), the curve $\Cc_\FR$ is a smooth curve
that is contained in $M$.  Thus, the restriction of $(\Xx_M,\mcT_M)$
gives an integral extension of $(\Xx_\eta,\mcT_\eta)$.

\mni
By the Bertini irreducibility theorem,
for a general complete
intersection curve $\Cc_\FR$, the sections of the relative Picard of
$\Xx_M/M$ over $\Cc_\FR$ equal the sections over all of $M_\FR$.  If
$(H_S,P_S,G_S)$ is primitive, the the group of sections is a free
Abelian group of rank $1$.  Thus, the restricted family $\Xx_\FR\to
\Cc_\FR$ with the restricted universal torsor $\mcT_\eta$ satisfies
\cite[Situation 5.1]{Zhu}. By the proof of \cite[Theorem 1.4]{Zhu},
this family has an Abel sequence.  
\end{proof}

\begin{cor}\cite[Theorem 1.4, Lemma 12.1]{Zhu} \label{cor-group}
  \marpar{cor-group} 
Assume Hypothesis \ref{hyp-group}.  There exists an $F(\eta)$-point of
$\Xx_\eta$.  
\end{cor}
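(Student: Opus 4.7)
The plan is to combine Theorems \ref{thm-group} and \ref{thm-main-b}, after reducing to the case where the associated triple $(H_S, P_S, G_S)$ from Notation \ref{notat-group} is primitive. First I would handle the primitive case directly. Theorem \ref{thm-group} produces an integral extension $(\pi:\Cc_\Rr\to \SP\Rr,\ f_\Rr:\Xx_\Rr\to\Cc_\Rr)$ whose generic fiber $\Xx_\FR\to\Cc_\FR$ is a rationally simply connected fibration with universal torsor $\mcT_\FR$ and is equipped with an Abel sequence. One then verifies that the hypotheses of Theorem \ref{thm-main-b} are all in force: the acyclic hypothesis \ref{hyp-lift-b} comes from Case 3 of Section \ref{sec-state} applied fiberwise; Hypothesis \ref{hyp-positiveMori} is automatic for projective homogeneous varieties since their Mori cones are simplicial with extremal rays indexed by the simple roots not in $P$; Hypothesis \ref{hyp-Sec} is immediate from the universal torsor; and Hypothesis \ref{hyp-Abel} is exactly the output of Theorem \ref{thm-group}. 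Applying Theorem \ref{thm-main-b} with any $h\geq |e_0|$ then supplies an $F$-point $[\mcC_F]$ of $Z_{\Rr,h\theta}$ which, via Lemma \ref{lem-Sigma} and the valuative criterion of properness, yields the desired $F(\eta)$-rational point of $\Xx_\eta$.

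Next I would reduce the general case to the primitive one by induction on the rank of the $\pi_0 G_S$-invariant sublattice of $Q_S^D$. If the invariant rank is $1$, the triple is primitive by Definition \ref{defn-prim}. Otherwise, there exists a $\pi_0 G_S$-invariant dominant character $\lambda \in Q_S^D$ not commensurate with $\rho_X$, determining a standard parabolic $P' \supsetneq P$ stable under $\pi_0 G_S$ and hence a $G_S$-equivariant projection $X_{H,P}\to X_{H,P'}$ whose fibers are themselves projective homogeneous spaces of the form $P\backslash P'$. Twisting through by the $G_S$-torsor underlying $\Xx_\eta$ produces an $F(\eta)$-morphism $\Xx_\eta\to\Xx'_\eta$ whose base and fibers are projective homogeneous $F(\eta)$-schemes of strictly smaller Picard rank. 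The universal torsor $\mcT_\eta$ splits compatibly with this factorization and restricts to universal torsors on both the base and each fiber, so by the inductive hypothesis $\Xx'_\eta$ acquires an $F(\eta)$-rational point; the fiber of $\Xx_\eta\to\Xx'_\eta$ over that point is again of the form handled by Hypothesis \ref{hyp-group} and, by induction, also has an $F(\eta)$-point, producing the desired rational point of $\Xx_\eta$.

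The principal obstacle is making the inductive reduction rigorous: one must verify that the universal torsor $\mcT_\eta$ descends to a universal torsor on the base and restricts to universal torsors on the fibers, so that Hypothesis \ref{hyp-group} propagates through the induction, and that the triples at each stage remain finite-by-reductive. This bookkeeping is essentially the content of \cite[Lemma 12.1]{Zhu} and consists of formal manipulations with parabolics, their multiplicative quotients, and the Picard functors. The essential geometric content, namely the passage from an Abel sequence to an $F$-rational point via Esnault's theorem on coniveau, has already been carried out in Theorem \ref{thm-main-b}, so once the reduction is in place the corollary follows.
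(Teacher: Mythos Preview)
Your approach is essentially the paper's: handle the primitive case by combining Theorems \ref{thm-group} and \ref{thm-main-b}, then in the non-primitive case factor through a $G_S$-stable intermediate parabolic $P_S \subsetneq P'_S \subsetneq H_S$ and induct. The one substantive difference is the induction variable. The paper inducts on $\dim \Xx_\eta$, which drops strictly for both the base $X_{H,P'}$ and the fiber $P\backslash P'$, no matter what the automorphism groups turn out to be. You induct on the rank of the $\pi_0 G_S$-invariant sublattice of $Q_S^D$; this decreases for the base, but for the fiber over an $F(\eta)$-point you must rebuild the triple from scratch via Notation \ref{notat-group}, and the component group of the \emph{new} automorphism scheme need not be the restriction of $\pi_0 G_S$, so the invariant rank of the fiber is not a priori smaller than the original. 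Inducting on dimension (or on total geometric Picard rank) sidesteps this. Apart from that, your argument matches the paper's, and you are more explicit than the paper in checking that Hypotheses \ref{hyp-lift-b}--\ref{hyp-Abel} hold before invoking Theorem \ref{thm-main-b}.
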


\begin{proof}
This is proved by induction on the dimension of $\Xx_\eta$.  If the
triple $(H_S,P_S,G_S)$ is primitive, then there exists an
$F(\eta)$-point by Theorem \ref{thm-main-b} and Theorem
\ref{thm-group}.  If the triple is not primitive, then there exists
$G_S$-stabilized standard parabolic subgroup $P'_S$ that strictly contains
$P_S$, cf. \cite[Lemma 12.1]{Zhu}.  Thus $X_{H,P'}$ has positive
dimension that is strictly
smaller than the
dimension of $X_{H,P}$.  Denote by $\Xx'_M$ the 
twist of $\Xx_{H,P'}$ over $M$.  The $(H_S,G_S)$-equivariant morphism
$X_{H,P}\to X_{H,P'}$ gives a 
smooth, projective morphism $\Xx_M\to \Xx'_M$.  By induction on the
dimension, $\Xx'_\eta = \zeta^*\Xx'_M$ has an $F(\eta)$-point.  The
fiber of $\Xx_\eta \to \Xx'_\eta$ over this $F(\eta)$-point is a
standard projective homogeneous space for the Levi factor of $P'_S$.
By the induction hypothesis, this projective homogeneous space has an
$F(\eta)$-point.   
\end{proof}


\subsection{Complete Intersections} \label{subsec-CI}
\marpar{subsec-CI}

\begin{notat} \label{notat-CI-b} \marpar{notat-CI-b}
Let $S$ be a scheme.
Let $n$ and $1\leq b \leq n$ be positive integers.  Let
$\ul{d}=(d_1,\dots,d_b)$ be an ordered $b$-tuple of integers $d_i\geq
2$.  
For each
$j=1,\dots,\cb$, denote by $V_j(d_j)$ the free $\OO_S$-module
$H^0(\PP^{n}_S,\OO_{\PP^{r_j}_S}(d_j))$.
Denote by
$V(\underline{d})$ the direct sum $V_1(d_1)\oplus \dots \oplus
V_\cb(d_\cb)$ as a free $\OO_T$-module.  Denote by $\PP_S
V(\underline{d})$ the projective space over $S$ on which there is a
universal ordered
$\cb$-tuple $(\phi_1,\dots,\phi_\cb)$ 
of sections of the invertible sheaves $\OO_{\PP^{n}}(d_j)$. 
Precisely, for
the product 
$$
P=\PP_S V(\underline{d}) \times_S \PP^n_S
$$
with its projections 
$$
\text{pr}_1:P\to
\PP_TV(\underline{d}) \text{ and }
\text{pr}_2:P\to \PP_T^{n},
$$
the sequence
$(\phi_1,\dots,\phi_\cb)$ is a universal homomorphism of coherent sheaves
$$
\text{pr}_2^*\OO_{\PP^{n}_S}(-d_1)\oplus \dots \oplus
\text{pr}_2^*\OO_{\PP^{n}_S}(-d_\cb) \to \text{pr}_1^*\OO_{\PP_S
  V(\underline{d})}(1),  
$$
or equivalently, a universal homomorphism of coherent sheaves,
$$
(\phi_1,\dots,\phi_\cb):\text{pr}_1^*\OO_{\PP_S
  V(\underline{d})}(-1)\otimes\left(\text{pr}_2^*\OO_{\PP^{n}_T}(-d_1)\oplus
  \dots \oplus 
\text{pr}_2^*\OO_{\PP^{n}_T}(-d_\cb)\right) \to \OO_P.  
$$
For each $j=1,\dots,b$, denote by $Y_j\subset P$ the effective Cartier
divisor defined by $\phi_j$.  Denote by $X_j$ the intersection
$Y_1\cap \dots \cap Y_j$ as a closed subscheme of $P$.
\end{notat}

\begin{defn} \label{defn-CI2-b} \marpar{defn-CI2-b}
For every $j=1,\dots,\cb$,
the \emph{smooth locus} $\PP_S V(\underline{d})^{\text{sm}}_j$, resp. the
\emph{Lefschetz locus} $\PP_S V(\underline{d})^{\text{Lef}}_j$, is
the maximal open subscheme of $\PP_S V(\underline{d})$ over which
$\Xx_j$ is flat and every geometric fiber is smooth, resp. has at
worst a
single ordinary double point.  The \emph{degenerate locus}, resp. the
\emph{badly degenerate locus}, is the closed complement of $\PP_S
V(\underline{d})_j^{\text{sm}}$, resp. $\PP_S V(\ul{d})_j^{\text{Lef}}$.
\end{defn}

\begin{prop} \cite[Expos\'{e} XVII, Th\'{e}or\`{e}me
    2.5]{SGA7II} \label{prop-Lef-b} \marpar{prop-Lef-b} 
Assume that $\cb < n$.  Then for every $j=1,\dots,\cb$,
the degenerate locus is a proper closed
subset, and the badly degenerate locus of $\Xx_j$ is a proper closed subset of
codimension $>1$.
\end{prop}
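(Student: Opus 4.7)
The plan is to recover this from a Jacobian/incidence-variety computation, exactly in the spirit of SGA 7 II. The key observation is that, by the Jacobian criterion, $X_j$ fails to be smooth at a geometric point $x$ of $\PP^n_S$ exactly when $\phi_1(x) = \cdots = \phi_j(x) = 0$ and the differentials $d\phi_1|_x, \ldots, d\phi_j|_x$ are linearly dependent in the $n$-dimensional cotangent space $T^*_x \PP^n_S$.

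Working one geometric fiber of $S$ at a time, I would form the incidence scheme
\[
I_j \;=\; \{(\ul{\phi}, x) \in \PP_S V(\ul{d}) \times_S \PP^n_S : X_j \text{ is singular at } x\}
\]
and project to $\PP^n_S$. The fiber over a geometric point $x$ is the intersection of the linear subspace of tuples vanishing at $x$ (codimension $j$) with the determinantal locus where the $j$ differentials fail to span a $j$-plane in an $n$-dimensional space (codimension $n-j+1$). The hypothesis $j \leq \cb < n$ yields $n - j + 1 \geq 2$, so the fiber codimension in $\PP_S V(\ul{d})$ is exactly $n+1$. Hence $\dim I_j = \dim \PP_S V(\ul{d}) - 1$, and the image of the first projection, which is the degenerate locus, is a proper closed subset.

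For the badly degenerate locus I would stratify by the type of failure. The locus with two distinct singular points is the image in $\PP_S V(\ul d)$ of $I_j \times_{\PP_S V(\ul{d})} I_j$ minus the diagonal; applying the preceding dimension count at a pair of distinct geometric points $(x_1, x_2) \in (\PP^n_S \times_S \PP^n_S) \setminus \Delta$, where the two Jacobian conditions impose independent constraints, shows that this double incidence has dimension $\dim \PP_S V(\ul{d}) - 2$, hence its image has codimension $\geq 2$ in $\PP_S V(\ul{d})$. For a complete intersection with a single singular point $x$, the condition of being worse than an ordinary double point translates into the Hessian of a local defining equation for $X_j$ near $x$, viewed as a quadratic form on the $(n-j+1)$-dimensional space $T_x X_{j-1}$, having rank strictly less than $n-j$; this imposes an additional nontrivial determinantal condition of codimension at least $1$ within the discriminant, again by a direct calculation in the fiber over $x$. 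Combining the two strata yields codimension strictly greater than $1$ for the badly degenerate locus.

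The main obstacle is the Hessian count in the last step, carried out uniformly in families over a base $S$ that may be of mixed characteristic: one must parametrize the space of Hessians in a $\Gm{m,S}$-equivariant way and verify that the determinantal stratification has the expected codimension even in residue characteristic $2$, where the notion of an ordinary double point requires extra care. This is precisely the content handled in \cite[Expos\'{e} XVII, Th\'{e}or\`{e}me 2.5]{SGA7II}, to which we appeal for the full statement.
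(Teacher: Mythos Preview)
Your direct incidence-correspondence approach differs from the paper's. The paper argues by induction on $j$: the case $j=1$ is the cited SGA7 result; for the inductive step, over the open where $X_{j-1}$ is smooth one applies SGA7 to the single hypersurface $Y_j \cap X_{j-1}$, while at a generic point $\xi$ of the (codimension-$1$) degenerate locus of $X_{j-1}$ the fiber $X_{j-1,\xi}$ has a single ordinary double point and one checks via Bertini that for generic $\phi_j$ that point is avoided. Everything is thereby reduced to the single-hypersurface Lefschetz theory already packaged in SGA7.

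Your execution has two concrete gaps. First, the claim that the Jacobian conditions at two distinct points $x_1, x_2$ are independent is false in general: take $j=1$, $d_1=2$, $n=2$, where the unique conic singular at both $x_1$ and $x_2$ is the squared line through them. Your predicted fiber codimension $2(n+1)=6$ would force an empty fiber in $\PP^5$, whereas the fiber is a point, and the double incidence has dimension $4$, not your claimed $\dim \PP_S V(\ul{d})-2=3$. The conclusion survives in this example (the image is the Veronese surface, codimension $3$), but your argument does not establish it; the map $H^0(\OO(d_i)) \to J^1_{x_1}\oplus J^1_{x_2}$ need not be surjective for small $d_i$. Second, your Hessian analysis of the single-bad-singularity stratum tacitly assumes $X_{j-1}$ is smooth at $x$, since otherwise $T_x X_{j-1}$ does not have dimension $n-j+1$ and there is no single ``local defining equation'' for $X_j$ inside $X_{j-1}$. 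You do not treat the case where the singularity of $X_j$ arises from a singularity of $X_{j-1}$ together with $\phi_j(x)=0$. That stratum is easily seen to have codimension $\geq 2$, but it is exactly what the paper's induction is set up to absorb, and your closing appeal to SGA7 does not cover it either, since that reference treats hypersurface sections of a \emph{smooth} ambient scheme.
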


\begin{proof} 
This is proved by induction on $j$.  For $j=1$, this follows from
loc. cit.  By way of induction, assume that $j>1$ and assume that the
result is proved for $j-1$.  By loc. cit., the intersection with
$\PP_S V(\ul{d})^{\text{sm}}_{j-1}$ of the degenerate locus,
resp. badly degenerate locus, of $\Xx_j$ is a proper closed subset,
resp. has codimension $>1$.  It suffices to prove that
$\PP_S V(\ul{d})^{\text{sm}}_j$ contains every generic point $\xi$ of
the degenerate 
locus of $\Xx_{j-1}$ that is not in the badly degenerate locus.  By
hypothesis, $\Xx_{j-1,\xi}$ has a unique ordinary double point.  In
$\PP_S V(\ul{d})$, it is a codimension $1$ condition for $\phi_j$ to
vanish at this point.  On the complement of this proper closed subset,
the degenerate locus of $\Xx_j$ is a proper closed subset 
by Bertini's smoothness
theorem \cite[Th\'{e}or\`{e}me 6.3(2)]{Jou}.
\end{proof}

\begin{hyp} \label{hyp-CI-b} \marpar{hyp-CI-b}
Let $S$ be $\SP R$ for a Henselian DVR with finite residue field $F$ and
with characteristic $0$ fraction field $\FR$.
Let $F(\eta)$ be $F(\Cc'_F)$ for a smooth, projective,
geometrically connected $F$-curve $\Cc'_F$.  
Notations are as in \ref{notat-CI-b}. Assume that $\cb<n$.
Let $\Xx_\eta \subset
\PP^n_{F(\eta)}$ be an intersection $Y_{\eta,1}\cap \dots
\cap Y_{\eta,\cb}$ of hypersurface
$Y_{\eta,i}=\text{Zero}(\phi_{\eta,i})$ 
of degree $d_i$.  Denote by $\zeta:\Cc'_F\to \PP_R V(\ul{d})$ the
$R$-morphism of $(\phi_{\eta,1},\dots,\phi_{\eta,\cb})$.  Denote by
$m$ the relative dimension of $\PP_R V(\ul{d})$ over $\SP R$.
\end{hyp}

\begin{thm}\cite{DeLand} \label{thm-TsenLang} \marpar{thm-TsenLang}
Assume Hypothesis \ref{hyp-CI-b}.  
For every general $(m-1)$-tuple $(D_1,\dots,D_{m-1})$ of very ample
divisors of $\PP_R V(\ul{d})$ that contain $\zeta(\Cc'_F)$, the common
intersection $\Cc_R = D_1\cap \dots \cap D_{m-1}$ and the restriction
$\Xx_{R,\cb} \to \Cc_R$ of $\Xx_\cb$
gives an integral
extension of $X_\eta \to \SP F(\eta)$.  The restriction of the
$\Gm{m}$-torsor of $\OO(1)$ on $\PP^n_R$ is a universal torsor.
If
$d_1^2 + \dots + d_\cb^2 \leq n$, then $\Xx_{\FR,\cb}\to \Cc_{\FR}$ is
a rationally simply connected fibration (of Picard rank $1$) 
in the sense of \cite[Theorem 13.1]{dJHS},
and the family has an Abel sequence.
\end{thm}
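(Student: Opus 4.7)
The plan is to mirror the proof of Theorem \ref{thm-group}: cut out $\Cc_\Rr$ as a general complete intersection of very ample divisors through $\zeta(\Cc'_F)$, identify the restriction of the tautological $\Gm{m}$-torsor on $\PP^n$ as a universal torsor via the Grothendieck--Lefschetz theorem, and then import rational simple connectedness and the Abel sequence from \cite[Theorem 13.1]{dJHS} and \cite{DeLand}.

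First I would construct the integral extension. Because $\Cc'_F$ is a smooth curve and $\PP_\Rr V(\ul{d})$ is $\Rr$-smooth, a general $(m-1)$-tuple of very ample divisors on the closed fiber containing $\zeta(\Cc'_F)$ cuts out a curve $\Cc_F$ in which $\zeta(\Cc'_F)$ is dense; a general lift to divisors $D_1,\dots,D_{m-1}$ on $\PP_\Rr V(\ul{d})$ containing $\zeta(\Cc'_F)$ then has $\Cc_\FR$ smooth by Bertini. By Proposition \ref{prop-Lef-b} the badly degenerate locus of $\Xx_\cb$ has codimension $>1$, so a general $\Cc_\FR$ meets it in at most finitely many points, and the generic point of $\Cc_F$ (whose residue field is $F(\eta)$) lies in $\PP_\Rr V(\ul{d})^{\text{sm}}_\cb$. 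Hence $\Xx_{\Rr,\cb}\to \Cc_\Rr$ is flat at $\eta$ with fiber $\Xx_\eta$, giving an integral extension in the sense of Definition \ref{defn-lift-b}.

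For the universal torsor claim, the inequality $d_1^2 + \dots + d_\cb^2 \leq n$ together with $d_i\geq 2$ forces $n-\cb \geq 3\cb \geq 3$, so by the Grothendieck--Lefschetz theorem every smooth complete intersection fiber of $\Xx_{\Rr,\cb}\to\Cc_\Rr$ has Picard group $\ZZ\cdot\OO(1)$. Consequently the relative Picard scheme of $\Xx_\FR/\Cc_\FR$ is the constant sheaf $\ZZ$, the Picard torus $Q_\FR$ is $\Gm{m,\Cc_\FR}$, and the $\Gm{m}$-torsor associated to the restriction of $\OO_{\PP^n_\Rr}(1)$ is, by construction, a universal torsor in the sense of Definition \ref{defn-elem}.

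Finally, when $d_1^2+\dots+d_\cb^2 \leq n$ the geometric generic fiber is a smooth Fano complete intersection in the rationally simply connected range of \cite[Theorem 13.1]{dJHS}, and the resulting analysis of the spaces of sections of $\Xx_\FR\to \Cc_\FR$ produces the Abel sequence demanded by Definition \ref{defn-Abelseq}. The hard step is this last one: one must verify that the fibers of the Abel map on the distinguished components of $\Sec^e(\Xx_\FR/\Cc_\FR/\FR)$ are birationally rationally connected and that attaching general fibral lines of the $\rho=1$ extremal class to a $(g)$-free section remains in that component. This is precisely the content of \cite[Theorem 13.1]{dJHS}, refined in \cite{DeLand}; given those inputs, the preceding Bertini-plus-Lefschetz package is routine.
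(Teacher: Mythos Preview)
Your overall strategy matches the paper's: build $\Cc_\Rr$ by Bertini through $\zeta(\Cc'_F)$, use Proposition~\ref{prop-Lef-b} to control the singular fibers, then invoke \cite[Theorem~13.1]{dJHS} with the geometric-generic-fiber hypotheses supplied by \cite{DeLand}. Two points deserve tightening.

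First, ``meets the badly degenerate locus in at most finitely many points'' is weaker than what you want and weaker than what codimension~$>1$ actually gives: since the constraint $\zeta(\Cc'_F)\subset\Cc_\Rr$ lives only on the closed fiber, a general lift has $\Cc_\FR$ \emph{entirely contained} in the Lefschetz locus. The paper uses this, and it matters for the second point.

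Second, you skip the verification of the \emph{global} hypotheses of \cite[Theorem~13.1]{dJHS}, which the paper checks explicitly: (a) smoothness of the total space $\Xx_{\FR,\cb}$, obtained by observing that $\Xx_\cb\to\PP^n_\FR$ is a projective bundle (hence $\Xx_\cb$ is smooth) and then applying Bertini to the pullbacks of the $D_i$; and (b) irreducibility of \emph{every} geometric fiber of $f_\FR$, not just the generic one. For (b) the paper uses exactly your inequality $n-\cb\geq 3\cb\geq 3$ together with Enriques--Severi--Zariski for connectedness, plus normality (each fiber has at worst one ordinary double point because $\Cc_\FR$ sits in the Lefschetz locus). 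Your Grothendieck--Lefschetz argument only addresses the smooth fibers, so it does not by itself give (b).

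On the positive side, your use of Grothendieck--Lefschetz to justify that the $\Gm{m}$-torsor of $\OO(1)$ is universal is more explicit than the paper, which simply asserts this in the statement and does not revisit it in the proof.
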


\begin{proof}
Existence of the integral extension is basically the same as in the
proof of Theorem \ref{thm-group}.
By Proposition \ref{prop-Lef-b}, for a general choice of
$(D_1,\dots,D_{m-1})$, the curve $\Cc_\FR$ is a smooth, projective,
geometrically connected curve contained in the
Lefschetz locus of $\Xx_\cb$ and having dense intersection with the
smooth locus.  

\mni
Now assume that $d_1^2 + \dots + d_\cb^2\leq n$.
There are three global hypotheses in \cite[Theorem 13.1]{dJHS}, and
the remaining hypotheses are on the geometric generic fiber of
$f_\FR:\Xx_{\FR,\cb} \to \Cc_\FR$.  The first hypothesis is that $\Xx_{\FR,\cb}$
is smooth.  The projection $\Xx_\cb \to \PP^n_\FR$ is a projective
space bundle, hence $\Xx_\cb$ is smooth.  By Bertini's theorem,
\cite[Th\'{e}or\`{e}me 6.3(2)]{Jou}, for $D_1,\dots,D_{m-1}$ general,
the inverse image of $\Cc_\FR = D_{\FR,1}\cap \dots \cap D_{\FR,m-1}$
under the projection $\Xx_\cb \to \PP_R V(\ul{d})$ is smooth.  

\mni
The second hypothesis is that every geometric fiber of $f_\FR$ is irreducible.
By hypothesis, every $d_i\geq 2$, so that $d_1^2 + \dots + d_\cb^2
\geq 4\cb$.  Thus, $n\geq 4\cb$, so that also $n-\cb \geq 3\cb \geq
3$.  Every fiber of $f_\FR$ is a complete intersection of ample
divisors of dimension $\geq 3$.  Thus, by the lemma of
Enriques-Severi-Zariski, the complete intersection is connected.
Moreover, the complete intersection has at most a single ordinary
double point.  Thus, the fiber is normal, hence it is irreducible.

\mni
The third global hypothesis is that $\OO(1)$ is $f_\FR$-ample.  In
fact it is $f_\FR$-very ample since $\Xx_\FR$ is a closed subscheme of
$\Cc_\FR \times_{\SP \FR} \PP^n_\FR$.  

\mni
The remaining hypotheses are all hypotheses of the geometric generic
fiber of $f_\FR$, which is a smooth complete intersection with $d_1^2 +
\dots + d_\cb^2 \leq n$ that is general.  These
hypotheses are all established by Matt DeLand, \cite{DeLand}.
Therefore, by \cite[Theorem 13.1]{dJHS}, there exists an Abel sequence. 
\end{proof}


\subsection{Hypersurfaces in Grassmannians} \label{subsec-Grass}
\marpar{subsec-Grass}

\begin{notat} \label{notat-CI-b} \marpar{notat-CI-b}
Let $S$ be an affine scheme.  Let $m>0$ be an integer, and let $\ell>0$ be an
integer such that $m\geq 2\ell$ (or else replace $\ell$ by $m-\ell$).
Let $d>0$ be an integer.
Let $H_S$ be $\textbf{SL}_{m,S}$.  Let $P_S\subset H_S$ be the
parabolic that preserves the projection $\OO_S^{\oplus m} \to
\OO_S^{\oplus \ell}$ onto the first $\ell$ factors.  Thus,
$X_{H,P}=P_S\backslash H_S$ is the Grassmannian
$\text{Grass}_S(\ell,\mathcal{O}_S^{\oplus m})$.  Also the universal
torsor $\mcT_{H,P}$
is the $\Gm{m}$-torsor associated to the Pl\"{u}cker
invertible sheaf $\OO(1)$.  Let $G_S$ be $\text{Aut}_S(X_{H,P},\mcT_{H,P}).$
Let $W(d)$ be the
free $\OO_S$-module $H^0(X_{H,P},\OO(d)).$  Denote by $\PP_S
W(d)^{\text{Lef}}$ the Lefschetz locus parameterizing degree $d$
hypersurfaces that have at worst a single ordinary double point,
cf. Definition \ref{defn-CI2-b}.
\end{notat}

\mni
Let $V_S$ be the linear representation of $G_S$ from the proof of
Theorem \ref{thm-SdJ} with $c=1$.  
Define $\ol{M}'$ to be the uniform categorical
quotient of the action of $G_S$ on the semistable locus of
$\PP_S(V_S)\times_S \PP_S W(d)$.  By the proofs of Theorem
\ref{thm-SdJ} and Proposition \ref{prop-Lef-b}, 
there is an open
subscheme $M'\subset \ol{M}'$ whose inverse image equals $\PP_S(V_S)^o
\times_S \PP_S W(d)^{\text{Lef}}$ and that has closed complement
$\partial \ol{M}'$ of codimension $>1$.  Since $\PP_S(V_s)^o\times_S
\PP_S W(d)^{\text{Lef}} \to M'$ is flat, also $M'$ is $R$-smooth, 
\cite[Proposition 17.7.7]{EGA4}.  Denote by $G_{M'}\to M'$ and
$\mcT_{M'}$ 
the twist
of $\Xx_{H,P}$ and $\mcT_{H,P}$.  Denote by $\Xx_{M'}\subset G_{M'}$
the universal degree $d$ hypersurface.

\begin{hyp} \label{hyp-Grass} \marpar{hyp-Grass}
Let $S$ be $\SP R$ for a Henselian DVR with finite residue field $F$ and
with characteristic $0$ fraction field $\FR$.
Let $F(\eta)$ be $F(\Cc'_F)$ for a smooth, projective,
geometrically connected $F$-curve $\Cc'_F$.  Let $G_\eta$ be a smooth,
projective $F(\eta)$-scheme that is geometrically isomorphic to
$X_{H,P}$.  Assume that there exists a universal torsor $\mcT_\eta$.
Let $X_\eta\subset G_\eta$ be a closed subscheme that is geometrically
isomorphic to a degree $d$ hypersurface in $X_{H,P}$.  Denote by
$\zeta:\Cc'_F \to \ol{M}'_F$ the associated $R$-morphism.  Denote by
$s$ the relative dimension of $\ol{M}'$ over $\SP R$.
\end{hyp}

\begin{thm}\cite{Findley} \label{thm-Grass} \marpar{thm-Grass}
Assume Hypothesis \ref{hyp-Grass}.  For every general $(s-1)$-tuple
$(D_1,\dots,D_{s-1})$ of very ample divisors of $\ol{M}'_R$ that
contain $\zeta(\Cc'_F)$, 
the common
intersection $\Cc_R = D_1\cap \dots \cap D_{m-1}$
and the restriction
$\Xx_{R} \to \Cc_R$ of $\Xx_{M'}$
gives an integral
extension of $X_\eta \to \SP F(\eta)$.  The restriction of $\mcT_{M'}$
is a universal torsor.
If
$(3\ell -1)d^2 -d < m-4\ell-1$, then 
$\Xx_{\FR,\cb}\to \Cc_{\FR}$ is
a rationally simply connected fibration (of Picard rank $1$) 
in the sense of \cite[Theorem 13.1]{dJHS},
and the family has an Abel sequence.
\end{thm}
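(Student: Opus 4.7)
The plan is to follow the blueprint of Theorem \ref{thm-group} and Theorem \ref{thm-TsenLang}, with Findley's results \cite{Findley} playing the role that \cite{DeLand} played in the complete intersection case. The global setup is already in place: $\overline{M}'_R$ is projective over $R$, the smooth open $M'\subset \overline{M}'$ has boundary of codimension $>1$, and $M'$ is $R$-smooth. Thus the formal apparatus of Theorem \ref{thm-SdJ} and Proposition \ref{prop-Lef-b} has been assembled so as to make $M'$ a generic splitting variety for the pair (degree $d$ hypersurface, universal torsor).

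First I would produce the integral extension. Since $\mathcal{C}'_F$ is smooth and $\zeta(\mathcal{C}'_F)$ meets $M'_F$, one can select very ample divisors $(D_{1,F},\dots,D_{s-1,F})$ of $\overline{M}'_F$ containing $\zeta(\mathcal{C}'_F)$ whose intersection is a curve $\mathcal{C}_F$ meeting $\zeta(\mathcal{C}'_F)$ in a dense open. Since $\overline{M}'_R$ is projective over the Henselian $R$, lift each $D_{i,F}$ to a general element of its complete linear system $\mathbb{P}^{N_i}_R$. Because $\partial\overline{M}'$ has codimension $>1$ in every fiber, the generic fiber $\mathcal{C}_K$ of $\mathcal{C}_R = D_1\cap\cdots\cap D_{s-1}$ is, by Bertini \cite[Th\'{e}or\`{e}me 6.3(2)]{Jou}, a smooth, geometrically connected curve contained entirely in $M'_K$. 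The restriction of $(\mathcal{X}_{M'},\mathcal{T}_{M'})$ along $\mathcal{C}_R\hookrightarrow \overline{M}'_R$ then yields an integral extension of $(X_\eta,\mathcal{T}_\eta)$, and the universal torsor property is preserved since $\mathcal{T}_{M'}$ is universal on each fiber of $f_{M'}$ by construction.

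Next I would verify the three global hypotheses of \cite[Theorem 13.1]{dJHS} for the restricted family $f_K:\mathcal{X}_K\to \mathcal{C}_K$. Smoothness of $\mathcal{X}_K$ follows from Bertini applied to the smooth projective bundle $\mathcal{X}_{M'_K}\to M'_K$ pulled back along the general complete intersection $\mathcal{C}_K$. Irreducibility of each geometric fiber is guaranteed because $\mathcal{C}_K$ lies inside the locus where the hypersurface has at worst an ordinary double point (the Lefschetz locus in the second factor of the categorical quotient), and the inequality $(3\ell-1)d^2 - d < m-4\ell-1$ places the fiber dimension well inside the range of Enriques--Severi--Zariski connectedness, so that a normal fiber with at worst a node is irreducible. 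Ampleness of $\mathcal{O}(1)$ relative to $f_K$ is immediate from Pl\"{u}cker ampleness on each fiber of the twisted Grassmannian bundle $G_{M'_K}\to M'_K$.

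Finally I would invoke Findley. The remaining hypotheses of \cite[Theorem 13.1]{dJHS} concern the geometric generic fiber, which here is a general degree $d$ hypersurface in the Grassmannian $\mathrm{Grass}(\ell,m)$. Under the precise inequality $(3\ell-1)d^2 - d < m-4\ell-1$, Findley's work \cite{Findley} establishes the required statements about the rational curve spaces (the analogs of the DeLand results used in Theorem \ref{thm-TsenLang}). Combining these with \cite[Theorem 13.1]{dJHS} produces the rationally simply connected fibration structure of Picard rank $1$ and, along the lines of the proof of \cite[Theorem 1.4]{Zhu}, yields an Abel sequence for the family $(\mathcal{X}_K/\mathcal{C}_K/K,\mathcal{T}_K)$. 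The only real obstacle in this chain is the verification of the rational simple connectedness hypotheses for hypersurfaces in Grassmannians, which is precisely the content of \cite{Findley} and is assumed here as input; the rest is a mechanical adaptation of the arguments already used in Theorems \ref{thm-group} and \ref{thm-TsenLang}.
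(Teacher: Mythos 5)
Your proposal is correct and follows essentially the same route as the paper, whose proof simply says that the first part is identical to the proof of Theorem \ref{thm-TsenLang} (integral extension via Bertini over the Henselian $R$ using the codimension $>1$ boundary and the Lefschetz locus, then the three global hypotheses of \cite[Theorem 13.1]{dJHS}) and that the remaining geometric-generic-fiber hypotheses are supplied by \cite{Findley}. Your write-up just makes explicit the steps the paper delegates by reference.
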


\begin{proof}
The first part of the proof is precisely the same as in the proof of
Theorem \ref{thm-TsenLang}.  The hypotheses on the geometric generic
fiber of $f_\FR$ follow from \cite{Findley}.
\end{proof}

\begin{proof}[Proof of Theorem \ref{thm-app}]

The theorem in Case 1 follows from 
Theorem \ref{thm-main-b} and 
Theorem \ref{thm-TsenLang}.  The theorem in
Case 3 follows from 
Theorem \ref{thm-main-b},
Theorem \ref{thm-group}, and Corollary \ref{cor-group}.
Case 2 is a special case of Case 3.  The theorem in Case 4 follows
from
Theorem \ref{thm-main-b} and 
Theorem \ref{thm-Grass}.
\end{proof}

\section{Acknowledgments} \label{sec-ack}
\marpar{sec-ack}

\mni
We are very grateful to Aise Johan de
Jong, in particular. 
We are grateful
to both de Jong and Eduardo Esteves who pointed out that an Abel map
as in \cite{dJHS} must exist under weaker hypotheses. 
The authors thank Yi Zhu for helpful
discussions and Max Lieblich who explained the history of the
Brauer-Hasse-Noether theorem.  We are grateful to Jean-Louis
Colliot-Th\'{e}l\`{e}ne and the referees for their feedback on an
early draft.
JMS was supported
by NSF Grants DMS-0846972 and DMS-1405709, as well as a Simons
Foundation Fellowship. 
CX was supported by the 
National Science Fund for Distinguished Young Scholars (11425101),
'Algebraic Geometry'. 

\bibliography{my}
\bibliographystyle{alpha}

\end{document}